\newtheorem{assumption}{Assumption}
\newtheorem{corollary}{Corollary}
\newtheorem{lemma}{Lemma}
\newtheorem{remark}{Remark}
\newtheorem{theorem}{Theorem}
\newtheorem{definition}{Definition}
\let\mr=\mathrm
\begin{document}

\begin{frontmatter}



\title{Supercloseness of the DDG method for a singularly perturbed convection diffusion problem on Shishkin mesh\tnoteref{funding} }

\author[label1] {Xiaoqi Ma\fnref{cor1}}
\author[label1] {Jin Zhang\corref{cor2}}
\author[label1] {Xinyi Feng \fnref{cor3}}
\author[label1] {Chunxiao Zhang\fnref{cor4}}
\cortext[cor1] {Corresponding email: jinzhangalex@sdnu.edu.cn}
\fntext[cor2] {Others: xiaoqiMa@hotmail.com }
\address[label1]{School of Mathematics and Statistics, Shandong Normal University, Jinan 250014, China}

\begin{abstract}
This paper investigates the supercloseness of a singularly perturbed convection diffusion problem using the direct discontinuous Galerkin (DDG) method on a Shishkin mesh. The main technical difficulties lie in controlling the diffusion term inside the layer, the convection term outside the layer, and the inter element jump term caused by the discontinuity of the numerical solution. The main idea is to design a new composite interpolation, in which a global projection is used outside the layer to satisfy the interface conditions determined by the selection of numerical flux, thereby eliminating or controlling the troublesome terms on the unit interface; and inside the layer, Gau{\ss} Lobatto projection is used to improve the convergence order of the diffusion term. On the basis of that, by selecting appropriate parameters in the numerical flux,  we obtain the supercloseness result of almost $k+1$ order under an energy norm.
Numerical experiments support our main theoretical conclusion.
\end{abstract}

\begin{keyword}
Convection diffusion \sep DDG \sep Shishkin mesh \sep Supercloseness 

\end{keyword}

\end{frontmatter}


%
%
%
\section{Introduction}
In this paper, we study a direct discontinuous Galerkin (DDG) method to solve the following singularly perturbed two-point boundary value problem,
\begin{equation}\label{eq:SPP-1d}
\begin{aligned}
&-\epsilon w''(x)+a(x)w'(x)+b(x)w(x)=f(x),\quad x \in\Theta:=(0,1),\\
&w(0)=0, \quad w(1)=0,
\end{aligned}
\end{equation}
in which $0<\epsilon \ll 1$ is a perturbation parameter. Then we assume that $a$, $b$ and $f$ are sufficiently smooth functions with  $a(x)\ge \alpha>0$ on $\overline{\Theta}$ and  
\begin{equation}\label{eq:SPP-condition-1}
b-\frac{1}{2}a'\ge \gamma>0\quad \forall x \in \overline{\Theta},
\end{equation}
where $\alpha$ and $\gamma$ are fixed positive constants. These assumptions ensure that the boundary value problem \eqref{eq:SPP-1d} has a unique solution $w\in H_{0}^{1}(\Theta)\cap H^{2}(\Theta)$ for each $f\in L^{2}(\Theta)$; see \cite{Sty1Tob2:2003-motified}. Due to the parameter $\epsilon$ can be small enough, the solution $u$ usually produces a layer at $x = 1$ of width $\mathcal{O}(\epsilon\ln (1/\epsilon))$. At this point, numerical solutions obtained using the classical finite element method typically generate significant numerical oscillations in the whole region, see \citep[Section 6.1]{Sty1Sty2:2018-C}. 
 In order to correctly analyze these layers without increasing the scale of solving discrete problems, layer adapted meshes, which use a general mesh outside the layer  and a refined mesh inside the layer, have emerged.  On the other hand, due to the general interest in only global solutions, it is urgent to design stabilization methods to eliminate oscillations, regardless of the degree of mesh refinement.

 The current popular stability methods include streamline upwind/Petrov Galerkin (SUPG) method \cite{Miz1Hug2:1985-motified, Bur1:2010-C, Che1Xu2:2005-motified, Hug1Liu2:1979-F}, least squares finite element method \cite{Hug1Fra2Hul3:1989-motified}, bubble function stabilization \cite{Bre1Fra2:1998-F, Bre1Hug2:1999-motified, Bre1Rus2:1994-C}, local projection stabilization \cite{Bra1Bur2:2006-L, Mat1Skr2:2007--motified}, continuous internal penalty method \cite{Bur1:2005--motified, Bur1Ern2:2007-C}, the discontinuous Galerkin (DG) method and so on. Among them, DG method is a finite element method that utilizes a completely discontinuous piecewise polynomial space to solve numerical solutions and test functions. 
One attractive feature of this method is the flexibility provided by the combination of local approximation space and appropriate numerical flux across cell interfaces. Given this unique advantage, 
since it was first proposed by Reed and Hill in 1973 for the neutron transport equation,  many authors have extended it to various partial differential equations, see \cite{Car1Hop2:2011-A, Ces1Rhe2:2023-motified, Coc1Shu2:1998-motified, Hu1Hua2:2011-motified} for more details. 

However, most DG methods have a common feature of introducting  auxiliary variables in their formulation, such as the local discontinuous Galerkin (LDG) method \cite{Coc1Shu2:1998-motified}, which greatly increases the computational cost in numerical methods. To avoid this drawback, Liu and Yan \cite{Liu1Yan2：2008-motified} proposed a direct discontinuous Galerkin (DDG) method in 2009, which is based on the direct weak formulations for solutions and on appropriate numerical fluxes.
Compared with the LDG method, this method does not introduce new variables, so it has the advantages of easier formulation and efficient computation of numerical solutions.
Furthermore, the authors in \cite{Liu1Yan2：2008-motified} presented a general numerical flux formula for the derivative of the solution at the element interface, and introduced the concept of compatibility to identify suitable numerical fluxes.  Later, Liu et al. studied a more effective set of parameter selection in DDG numerical flux \cite{Hua1Liu2Yi3:2012-R}, demonstrating the optimal $L^{2}$ error estimate of the DDG method for solving convection diffusion problems in \cite{Liu1:2015-O}. 
It is worth noting that the DDG method degenerates into the classical interior penalty discontinuous Galerkin method by using  lower order piecewise constants and linear approximations \cite{Arn1:motified-1982, Whe:motified-1978}. And for higher order approximations ($k\ge 2$), the DDG method exhibits considerable advantages. For example, compared to the symmetric internal penalty discontinuous Galerkin method \cite{C2012-Discontinuous}, it does not require significant penalty parameters to maintain the stability of the scheme; Compared with the nonsymmetric interior penalty discontinuous Galerkin method \cite{Zha1Ma2:2021-S}, its computational costs will be reduced and etc, see \cite{Cao1Liu2Zha3:2017-S} for more details.

On the other hand, for the convective term, traditional DG methods usually use purely upwind numerical fluxes. 
However, for complex systems, it is difficult to construct pure upwind fluxes because they require accurate eigenstructures of the flux Jacobian matrix.
 In \cite{Men1Shu2Wu3:2016-O}, Meng et al. used a more general upwind-biased numerical flux, which may not always be monotonic, but can be more easily constructed, providing a good approximation for smooth solutions. On the basis of that, Ma and Stynes \cite{Ma1Sty2:2020-motified} analyzed a singularly perturbed problem using the DDG method and obtained an optimal error estimate of almost $k$ order. Inspired by \cite{Ma1Sty2:2020-motified}, we study the supercloseness of a singularly perturbed convection diffusion problem by using the DDG method.
In this process, the numerical flux in \cite{Ma1Sty2:2020-motified} is corrected, and the concept of compatibility is also introduced as a criterion for selecting appropriate numerical flux to ensure the stability and corresponding error estimation of the DDG method.


The ``supercloseness" here represent that the numerical solution is closer to the interpolation of the exact solution, rather than the exact solution itself.
The main technical difficulties of the DDG method for supercloseness analysis lie in the construction of special interpolation functions and the selection of parameters in numerical flux.  In \cite{Men1Shu2Wu3:2016-O}, Meng et al. constructed a suitable global projection based on the definition of upwind-biased flux, which can eliminate boundary terms between elements, thereby obtaining the desired supercloseness results. 
In this paper, we design a composite interpolation for the solution $u$. More specifically, Gau{\ss} Lobatto projection is used within the layer, and the global projection provided by \cite{Men1Shu2Wu3:2016-O} is used outside the layer. Then by choosing some specific values of the parameters in numerical flux, we obtain the supercloseness of almost $k+1$ order in the corresponding norm.
 Finally, a numerical experiment confirm the theoretical result.

The basic framework of this paper is organized as follows: In Section 2, we provide {\emph a priori} information of the solution, and introduce a layer-adapted mesh applicable to the continuous problem. In Section 3, the DDG method is presented and the concept of compatibility was given. Then, by defining a new composite interpolation, uniform convergence and supercloseness results are obtained in an energy norm in Section 4 and 5, respectively. Finally, our theoretical result can be validated through a  numerical experiment.


In this paper, let $k\ge 1$ be a fixed integer. Assume that $C$ is a generic positive constant and is independent of the perturbation parameter $\epsilon$ and the mesh parameter $N$. The value of $C$ varies in different situations.

%
%
%

\section{Regularity of the solution and Shishkin mesh}\label{sec:regularity,mesh,method}
\subsection{A-$prior$ information of the solution }\label{sec:regularity}
Now we give a-$prior$ estimates of the solution, which is crucial for the following error estimates.
\begin{theorem}\label{Thm:regularity}
Suppose that the condition \eqref{eq:SPP-condition-1} holds and the functions $a, b, f$ are sufficiently smooth. Then the solution $w$ of \eqref{eq:SPP-1d} admits the decomposition
\begin{equation*}\label{eq:decomposition}
w=S+E,
\end{equation*}
where  $S$ and $E$ meet $LS=f$ and $LE=0$, respectively. Moreover, for $0\le i\le k+1$, there is
\begin{equation}\label{eq:regularity}
\begin{aligned}
&|S^{(i)}(x)|\le C, \\
&|E^{(i)}(x)|\le C \epsilon^{-i}\exp\left(-\frac{ \alpha(1-x) }{ \epsilon }  \right).
\end{aligned}
\end{equation}
Note that here $k$ depends on the regularity of the coefficients, and when $a, b, f\in C^{\infty}(\Theta)$, \eqref{eq:regularity} holds. 
\end{theorem}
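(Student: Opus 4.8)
The plan is to follow the classical singular-perturbation decomposition: build the smooth part $S$ from an asymptotic expansion and control the layer part $E$ through a maximum principle. Throughout, write $Lv:=-\epsilon v''+a v'+b v$ for the operator in \eqref{eq:SPP-1d}. Since $a\ge\alpha>0$ the convection transports mass toward $x=1$, so the layer sits at the outflow endpoint $x=1$ and I retain the inflow condition at $x=0$ when the diffusion is stripped away; this is consistent with the weight $\exp(-\alpha(1-x)/\epsilon)$ in \eqref{eq:regularity}.

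First I would construct $S$ via a formal expansion $S\sim\sum_{j\ge0}\epsilon^{j}u_{j}$. The leading term $u_{0}$ solves the reduced first-order problem $a u_{0}'+b u_{0}=f$ with $u_{0}(0)=0$, and each correction solves $a u_{j}'+b u_{j}=u_{j-1}''$ with $u_{j}(0)=0$. Because $a\ge\alpha>0$ and the data are smooth, every $u_{j}$ is smooth with all derivatives bounded independently of $\epsilon$, by repeatedly solving linear first-order ODEs and differentiating the recursion. I would then \emph{define} $S$ to be the exact solution of $LS=f$ carrying the boundary data $S(0)=0$ and $S(1)=\sum_{j=0}^{k+1}\epsilon^{j}u_{j}(1)$, so that $LS=f$ holds by construction. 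Comparing $S$ with the truncated sum, the remainder $R$ satisfies $LR=\epsilon^{k+2}u_{k+1}''$ with boundary values of size $O(\epsilon^{k+2})$; a barrier estimate shows $R$ together with its first $k+1$ derivatives is uniformly small and therefore does not spoil the bounds, giving $|S^{(i)}(x)|\le C$ for $0\le i\le k+1$.

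Setting $E:=w-S$, linearity and $w(1)=0$ yield $LE=0$ with $E(0)=0$ and $E(1)=-S(1)=O(1)$, confirming that $E$ is the boundary-layer component. To bound $E$ I would invoke the comparison principle that $L$ obeys under \eqref{eq:SPP-condition-1} (after a standard exponential substitution if needed to render the zeroth-order coefficient nonnegative). Using the barrier $B(x)=C\exp\!\left(-\alpha(1-x)/\epsilon\right)$ one computes $LB=\left[\alpha(a-\alpha)/\epsilon+b\right]B\ge0$, where $a\ge\alpha$ is used; hence $B$ is a supersolution dominating $\pm E$ at the endpoints $x=0,1$, and comparison gives $|E(x)|\le C\exp(-\alpha(1-x)/\epsilon)$, which is the case $i=0$.

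The crux — and the step I expect to be hardest — is to propagate this to the derivative estimates $|E^{(i)}(x)|\le C\epsilon^{-i}\exp(-\alpha(1-x)/\epsilon)$ for $1\le i\le k+1$. I would argue by induction on $i$: rewriting the equation as $E''=\epsilon^{-1}(aE'+bE)$ and differentiating $LE=0$ repeatedly expresses each top derivative in terms of lower ones at the cost of one factor $\epsilon^{-1}$, so the exponent $\epsilon^{-i}$ emerges once the first derivative is under control. Controlling $E'$ itself is the delicate point, since the maximum principle does not apply to $E'$ directly; here I would either pass to the stretched variable $\xi=(1-x)/\epsilon$, in which the rescaled layer function has $\epsilon$-uniform derivatives, and transform back, or apply a refined barrier argument to a suitable first-order quantity. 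Either route must be executed carefully to retain the sharp exponential weight through all orders up to $k+1$; this bookkeeping, rather than any single inequality, is the main technical burden, whereas the construction and uniform bounds for $S$ reduce to standard regularity for the reduced first-order problems.
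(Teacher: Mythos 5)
Your outline is, in architecture, exactly the classical argument that stands behind this theorem: the paper itself gives no proof at all, but simply cites \citep[Lemma 1.9]{Roo1Sty2Tob3:2008-R}, and that lemma is proved in the literature by the very route you propose (reduced-problem expansion for $S$, barrier/maximum-principle bounds for $E$). Several pieces of your plan are sound as written: the recursion $au_0'+bu_0=f$, $au_j'+bu_j=u_{j-1}''$ with inflow data at $x=0$ is correct; the remainder identity $LR=\epsilon^{k+2}u_{k+1}''$ is correct; and the $i=0$ bound for $E$ is solid, since the computation $LB=\bigl[\alpha(a-\alpha)/\epsilon+b\bigr]B$ plus the exponential substitution needed when $b$ is not sign-definite under \eqref{eq:SPP-condition-1} only perturbs the convection coefficient by $O(\epsilon)$ and multiplies the barrier by a bounded factor, so the decay rate $\alpha$ in \eqref{eq:regularity} survives.

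There are, however, two genuine gaps, and together they cover essentially all of the theorem's actual content. First, the assertion that ``a barrier estimate shows $R$ together with its first $k+1$ derivatives is uniformly small'' is not a valid inference: barrier and comparison arguments control only $\sup|R|$, never derivatives. To conclude $|S^{(i)}|\le C$ you need an a priori derivative estimate of Kellogg--Tsan type, $|v^{(i)}(x)|\le C\bigl(1+\epsilon^{-i}e^{-\alpha(1-x)/\epsilon}\bigr)$, for solutions of $Lv=g$ with zero boundary data, applied to $R=\epsilon^{k+2}\tilde R$; and this estimate is nontrivial precisely because the naive route --- writing $R''=\epsilon^{-1}(aR'+bR-g)$ and applying Gronwall --- loses a catastrophic factor $e^{C/\epsilon}$. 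Second, the derivative bounds $|E^{(i)}|\le C\epsilon^{-i}e^{-\alpha(1-x)/\epsilon}$, $1\le i\le k+1$, which you yourself identify as the crux, are only deferred: you name two viable routes (stretched variable with local interior estimates, or a refined first-order barrier argument) but execute neither. Both gaps are resolved by the same standard device --- integrate the equation once using the integrating factor $\exp\bigl(-\epsilon^{-1}\int_0^x a\bigr)$, use a mean-value point on an interval of length $O(\epsilon)$ to seed the bound on the first derivative, then bootstrap through $E''=\epsilon^{-1}(aE'+bE)$ and its derivatives --- but since this is exactly what the cited Lemma 1.9 supplies, the proposal as written reproduces the skeleton of the theorem while leaving its hardest steps unproved.
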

\begin{proof}
According to \citep[Lemma 1.9]{Roo1Sty2Tob3:2008-R}, the proof of this theorem can be easily obtained.
\end{proof}

\subsection{Shishkin mesh }
Assuming that $\Theta_{N}=\{x_{j} \in [0, 1] : j=0, 1, 2, \ldots, N\}$ is set of mesh points and 
\begin{center}
$\mathcal{T}_{N}$=$\{I_{j}=(x_{j-1},x_{j}): j=1, 2,  \ldots, N\}$
\end{center}
is a partition of the domain $\Theta$. Here $h_{j}=x_{j}-x_{j-1}$ is the length of the element $I_{j}$ and $I$ is a general interval. Define $\Delta h_{j}=\min\{h_{j}, h_{j+1}\}$ for $j=1, 2, \cdots, N$, $h_{0}=h_{1}$ and $h_{N+1} := h_{N}$.

To better resolve the layer appearing in the solution of problem \eqref{eq:SPP-1d}, we use a piecewise uniform--Shishkin mesh \cite{Shi1:1998-G}. First, $\Theta$ is divided into a  coarse-mesh interval $[0,1-\tau_{t}]$ and a fine-mesh interval $[1-\tau_{t},1]$, in which the transition point $1-\tau_{t}=1-\frac{\sigma \epsilon}{\alpha}\ln N$ meets $\tau_{t}\le 1/2$ and $\sigma\ge k+1$. Then we assume that $N\in\mathbb{N}$ is divisible by $2$ and $N\ge 4$. After dividing $[0, 1-\tau_{t}]$ and $[1-\tau_{t}, 1]$ into $N/2$ equal subintervals, the mesh can be denoted by
\begin{equation}\label{eq:Shishkin mesh-Roos}
x_{j}=
\left\{
\begin{split}
& \frac{2(1-\tau_{t})}{N}j\quad &&\text{$j=0, 1, 2, \ldots,  N/2$},\\
&1-\tau_{t}+\frac{2\tau_{t}}{N}(j-\frac{N}{2})\quad &&\text{$j=N/2+1, N/2+2, \ldots, N$}.
\end{split}
\right.
\end{equation}
Note that $x_{N/2}=1-\tau_{t}$.
\begin{assumption}\label{assumption}
In this article, we suppose that $$\epsilon\le CN^{-1}.$$ This is not a restriction in practice.
\end{assumption}
\section{The DDG method}
\subsection{The DDG method}
For $I_{j}, j=1, 2, \cdots, N$, let $m$ be a nonnegative interger, then the broken Sobolev space of order $m$ can be defined as
 \begin{equation*}
H^{m}(\Theta, \mathcal{T}_N)=\{v\in L^{2}({\Theta}):
 \text{$v|_{I_{j}}\in H^{m}(I_{j})$, $\forall j=1, 2, \ldots, N$} \}.
\end{equation*}
And then we provide the norm and seminorm 
$$\Vert v \Vert^{2}_{m, \mathcal{T}_N}=\sum_{j=1}^{N} \Vert v \Vert^{2}_{m, I_{j}},\quad  \vert v \vert^{2}_{m, \mathcal{T}_N}=\sum_{j=1}^{N}\vert v \vert^{2}_{m, I_{j}}$$
where $\Vert \cdot \Vert_{m, I_{j}}$ and $\vert \cdot \vert_{m, I_{j}}$ are the norm and semi-norm in $H^{m}(I_{j})$. Moreover,  the $L^{2}(I)$-norm and the $L^{2}(I)$-inner product are represented by $\Vert \cdot \Vert_{I}$ and $(\cdot,\cdot)_I$. Then on Shishkin mesh, we define the $k$-degree discontinuous finite elment space as
\begin{equation*}\label{eq:VN}
V_{h}=\{w\in L^{2}({\Theta}):
 w|_{I_{j}}\in \mathbb{P}_k(I_{j}), \forall j=1, 2, \ldots, N\},
\end{equation*}
where $\mathbb{P}_k(I_{j})$ is the space of real-valued polynomials with degree at most $k$ on $I_{j}$.

 First, define the jump and average at the interior nodes $x_{j}$ by
$$[v]_{j}=v(x_j^{+})-v(x_j^{-}),\quad\{v\}_{j}=\frac{1}{2}(v(x_{j}^{+})+v(x_{j}^{-})),\quad \forall j=1, 2, \ldots, N-1,$$
for all functions $v\in H^{1}(\Theta, \mathcal{T}_{N})$. Here $v(x_j^{+})$=$\lim\limits_{x\to {x_{j}^{+}}}v(x)$ and $v(x_{j}^{-})$=$\lim\limits_{x\to {x_j^{-}}}v(x)$.
Furthermore, at the boundary points $x_{0}$ and $x_{N}$, we set
$$[v]_{0}=v(x_{0}^{+}),\quad \{v\}_{0}=v(x_{0}^{+}),\quad [v]_{N}=-v(x_{N}^{-}),\quad \{v\}_{N}=v(x_{N}^{-}).$$

Now the weak form of problem \eqref{eq:SPP-1d} is to find $u_{h}\in V_{h}$ such that 
\begin{equation}\label{eq:weak form-1d}
B(w_{h}, v_{h})=F(v_{h}),\quad \forall v_{h}\in V_{h},
\end{equation}
where $B(w_{h}, v_{h})=B_{1}(w_{h}, v_{h})+B_{2}(w_{h}, v_{h})+B_{3}(w_{h}, v_{h})$ and
\begin{align}
&B_{1}(w_{h}, v_{h})=\sum_{j=1}^{N} \int_{I_j}\epsilon w_{h}'v_{h}'\mr{d}x+\epsilon\sum_{j=0}^{N}(\widehat{w_{h}}(x_j)[v_{h}]_{j}+[w_{h}]_{j}\{v_{h}'\}_{j}),\nonumber\\
&B_2(w_{h}, v_{h})=-\sum_{j=1}^{N} \int_{I_j}a(x)w_{h}v_{h}'\mr{d}x-\sum_{j=1}^{N}\int_{I_{j}}a'(x)w_{h}v_{h}\mr{d}x-\sum_{j=0}^{N}a(x_{j})\widetilde{w_{h}}(x_{j})[v_{h}]_{j},\label{eq:B-2}\\
&B_3(w_{h}, v_{h})=\sum_{j=1}^{N} \int_{I_{j}}b(x)w_{h}v_{h}\mr{d}x,\nonumber\\
&F(v_{h})=\sum_{j=1}^{N} \int_{I_{j}}fv_{h}\mr{d}x.\nonumber
\end{align}
Here, we use a numerical flux $\widehat{w_{h}}(x_{j})$ to replace the values of $w_{h}'(x_{j}), j=0, 1, \cdots, N$. In addition, we use another numerical flux $\widetilde{w_{h}}(x_{j})$ instead of the value of $w$ at $x_{j}$.

According to \citep[Section 2.2]{Liu1Yan2：2008-motified}, we construct the numerical flux $\widehat{w_{h}}(x_{j})$ of
\begin{equation}\label{eq:A-1}
\widehat{w_{h}}(x_{j})=\left\{
\begin{aligned}
&\frac{\beta_{0, j}}{\Delta h_{j}}[w_{h}]_{j}+\{w'_{h}\}_{j}+\beta_{1}\Delta h_{j}[w''_{h}]_{j},\quad &&\text{for $j=1, 2, \cdots, N-1$},\\
&w'_{h}(x_{0}),\quad &&\text{for $j=0$},\\
&w'_{h}(x_{N}),\quad &&\text{for $j=N$},
\end{aligned}
\right.
\end{equation}
where $\beta_{0, j} (j=1, 2, \cdots, N-1)$ and $\beta_{1}$ are the positive coefficients selected to ensure stability of the scheme and enhance accuracy. 
Specifically, when $k=1$, we take $\beta_{1}=0$, and the value of $\beta_{0, j} (j=1, 2, \cdots, N-1)$ in different situations will be given in the following lemma.

For $\widetilde{w_{h}}(x_{j})$ in \eqref{eq:B-2}, we use the so-called upwind-biased flux \citep[Section 2.2.1]{Men1Shu2Wu3:2016-O}, rather than the purely upwind flux. More specifically, we choose
\begin{equation}\label{eq:A-2}
\widetilde{w_{h}}(x_{j})=\left\{
\begin{aligned}
&\theta w_{h}(x_{j}^{-})+(1-\theta) w_{h}(x_{j}^{+}),\quad&& \text{$j=1, 2, \cdots, N-1$},\\
&0,\quad&& \text{ $j=0$},\\
&w_{h}(x_{N}^{-}),\quad&& \text{$j=N$},
\end{aligned}
\right.
\end{equation}
in which we choose $\frac{1}{2}\le \theta \le 1$ as $a(x) > 0$. 
\subsection{The admissibility of DDG method}
In order to guarantee stability, and more importantly, to measure the goodness of the selection of $\beta_{0, j} (j=1, \cdots, N-1)$ and $\beta_{1}$, we will present the concept of admissibility of numerical flux according to \cite{Liu1Yan2:2010-motified1}.
\begin{definition}\label{eq: K-1}
\citep[Definition 2.1]{Liu1Yan2:2010-motified1} If there exist constants $\mu_{1}\in (0, 1)$ and $\mu_{2}\in (0, 1]$ such that for $w_{h}\in V_{h}$
$$\mu_{1}\sum_{j=1}^{N}\int_{I_{j}}(w'_{h})^{2}\mr{d}x+\sum_{j=0}^{N}(\widehat{w_{h}}[u_{h}]_{j}+[w_{h}]_{j}\{w_{h}\}_{j})\ge \mu_{2}\sum_{j=0}^{N}\frac{[w_{h}]_{j}^{2}}{\Delta h_{j}},$$
 then the numerical flux $\widehat{w_{h}}$ of \eqref{eq:A-1} is said to be admissible.
\end{definition}
\begin{lemma}\label{LLLLL}
\citep[Theorem 2.1]{Liu1Yan2：2008-motified} Assume that there are constants $\mu_{1}\in (0, 1)$ and $\mu_{2}\in (0, 1]$ such that
\begin{equation*}\label{eq:A-3}
\beta_{0, j}\ge \mu_{2}+\frac{1}{\mu_{1}}M(k, \beta_{1}),
\end{equation*}
where $M(k, \beta_{1}): =\max\limits_{w\in \mathbb{P}_{k}(I_{j})}\frac{\sum_{j=0}^{N}\Delta h_{j}(\{w'\}_{j}+\frac{\beta_{1}}{2}\Delta h_{j}[w'']_{j})^{2}}{\sum_{j=1}^{N}\int_{I_{j}}(w')^{2}\mr{d}x}$. Then for any given degree $k\ge 1$ of piecewise polynomials, the numerical flux \eqref{eq:A-1} is admissible.
\end{lemma}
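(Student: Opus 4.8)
The plan is to verify the admissibility inequality of Definition \ref{eq: K-1} directly, by substituting the explicit flux \eqref{eq:A-1} into the interface sum and reorganizing it so that the quotient defining $M(k,\beta_{1})$ emerges. First I would insert $\widehat{w_{h}}(x_{j})=\frac{\beta_{0,j}}{\Delta h_{j}}[w_{h}]_{j}+\{w'_{h}\}_{j}+\beta_{1}\Delta h_{j}[w''_{h}]_{j}$ into the diffusion-coercivity sum $\sum_{j=0}^{N}\bigl(\widehat{w_{h}}[w_{h}]_{j}+[w_{h}]_{j}\{w'_{h}\}_{j}\bigr)$ (this is $B_{1}(w_{h},w_{h})/\epsilon$ minus the volume term). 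Collecting terms gives the diagonal penalty $\sum_{j}\frac{\beta_{0,j}}{\Delta h_{j}}[w_{h}]_{j}^{2}$, and the two companion contributions merge into a single indefinite cross term
\[
2\sum_{j=0}^{N}[w_{h}]_{j}\Bigl(\{w'_{h}\}_{j}+\tfrac{\beta_{1}}{2}\Delta h_{j}[w''_{h}]_{j}\Bigr),
\]
whose factor is precisely the expression appearing in the numerator of $M(k,\beta_{1})$. Writing $T_{j}:=\{w'_{h}\}_{j}+\tfrac{\beta_{1}}{2}\Delta h_{j}[w''_{h}]_{j}$, this identifies the quantity we must control.

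Second, I would bound the cross term by a weighted Young inequality with a free parameter $\lambda>0$,
\[
2\,[w_{h}]_{j}T_{j}\ge -\frac{1}{\lambda}\,\frac{[w_{h}]_{j}^{2}}{\Delta h_{j}}-\lambda\,\Delta h_{j}T_{j}^{2},
\]
obtained by splitting $[w_{h}]_{j}/\sqrt{\Delta h_{j}}$ against $\sqrt{\Delta h_{j}}\,T_{j}$. Summing over $j$ and invoking the definition of $M(k,\beta_{1})$, which yields $\sum_{j}\Delta h_{j}T_{j}^{2}\le M(k,\beta_{1})\sum_{j}\int_{I_{j}}(w'_{h})^{2}\,\mathrm{d}x$, the left-hand side of the admissibility inequality (after adding the $\mu_{1}$-weighted Dirichlet energy) is bounded below by
\[
\bigl(\mu_{1}-\lambda M(k,\beta_{1})\bigr)\sum_{j=1}^{N}\int_{I_{j}}(w'_{h})^{2}\,\mathrm{d}x+\sum_{j=0}^{N}\frac{\beta_{0,j}-1/\lambda}{\Delta h_{j}}[w_{h}]_{j}^{2}.
\]

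Third, I would tune the parameter. Choosing $\lambda=\mu_{1}/M(k,\beta_{1})$ annihilates the gradient coefficient $\mu_{1}-\lambda M(k,\beta_{1})=0$ (so that harmless nonnegative term drops out) and gives $1/\lambda=M(k,\beta_{1})/\mu_{1}$, turning the penalty coefficient into $\beta_{0,j}-\mu_{1}^{-1}M(k,\beta_{1})$. The hypothesis $\beta_{0,j}\ge\mu_{2}+\mu_{1}^{-1}M(k,\beta_{1})$ then forces this coefficient to be $\ge\mu_{2}$ uniformly in $j$, delivering exactly the bound $\ge\mu_{2}\sum_{j}[w_{h}]_{j}^{2}/\Delta h_{j}$ required by Definition \ref{eq: K-1}, for the prescribed $\mu_{1}\in(0,1)$ and $\mu_{2}\in(0,1]$.

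The main obstacle is not the Young-inequality bookkeeping but justifying that $M(k,\beta_{1})$ is a genuine finite constant, depending only on $k$, $\beta_{1}$, and the (bounded) mesh ratios, so that the chosen $\lambda$ is legitimate. This rests on the scale invariance of the Rayleigh-type quotient defining $M(k,\beta_{1})$ together with polynomial inverse and trace inequalities on each $I_{j}$, which bound the interface values $\{w'_{h}\}_{j}$ and $\Delta h_{j}[w''_{h}]_{j}$ by $C\,\Delta h_{j}^{-1/2}\|w'_{h}\|_{L^{2}}$ on the two adjacent elements; because there are only two distinct element lengths on the Shishkin mesh \eqref{eq:Shishkin mesh-Roos}, the resulting ratio stays uniformly bounded. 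Once $M(k,\beta_{1})<\infty$ is secured, the argument above closes and the flux \eqref{eq:A-1} is admissible.
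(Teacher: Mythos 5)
Your proposal is correct and is essentially the paper's own route: the paper gives no argument itself but defers to Liu--Yan (2008), Theorem~2.1, and your proof---substituting the flux \eqref{eq:A-1} to extract the penalty term plus the cross term $2\sum_{j}[w_h]_j T_j$ with $T_j=\{w'_h\}_j+\tfrac{\beta_1}{2}\Delta h_j[w''_h]_j$, applying the weighted Young inequality, invoking the Rayleigh-quotient definition of $M(k,\beta_1)$ (finite by the inverse/trace estimates, as in the paper's Corollary~1), and tuning $\lambda=\mu_1/M(k,\beta_1)$---is precisely that standard argument. The only detail worth adding is at the boundary nodes $j=0,N$, where \eqref{eq:A-1} prescribes the one-sided derivative rather than the general formula, so there $T_j$ reduces to the one-sided average $\{w'_h\}_j$; with the paper's boundary conventions for $[\cdot]$ and $\{\cdot\}$ the same Young-inequality bound applies and the conclusion is unchanged.
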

\begin{proof}
The specific proof can be directly referenced in \citep[Theorem 2.1]{Liu1Yan2：2008-motified}.
\end{proof}
\begin{corollary}
We can always select $\beta_{0, j}$ to make the flux \eqref{eq:A-1} admissible.
\end{corollary}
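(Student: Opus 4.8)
The plan is to reduce the statement to the finiteness of the quantity $M(k,\beta_1)$ that appears in Lemma~\ref{LLLLL}. That lemma already guarantees that the flux \eqref{eq:A-1} is admissible as soon as $\beta_{0,j}\ge \mu_2+\tfrac{1}{\mu_1}M(k,\beta_1)$ holds for some fixed $\mu_1\in(0,1)$ and $\mu_2\in(0,1]$. Hence, once we know that $M(k,\beta_1)$ is a finite constant, we may simply fix, say, $\mu_1=\tfrac12$ and $\mu_2=1$ and then choose every $\beta_{0,j}$ no smaller than $1+2M(k,\beta_1)$. Since each $\beta_{0,j}$ is a free positive parameter in \eqref{eq:A-1}, such a choice is always available, which is exactly the assertion of the corollary. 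So the entire burden of the proof is to verify $M(k,\beta_1)<\infty$, with a bound independent of the mesh so that a single threshold works uniformly in $N$.

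To control $M(k,\beta_1)$ I would use a scaling argument together with the equivalence of norms on the finite-dimensional space $\mathbb{P}_k$. Mapping a generic element $I_j$ of length $h_j$ onto a fixed reference interval and noting that $w'|_{I_j}\in\mathbb{P}_{k-1}$ and $w''|_{I_j}\in\mathbb{P}_{k-2}$, the standard trace and inverse inequalities yield, with $C=C(k)$ depending only on $k$, the pulled-back bounds $\Delta h_j\,|w'(x_j^{-})|^2\le C\Vert w'\Vert_{I_j}^2$ and $\Delta h_j\,(\Delta h_j\,w''(x_j^{-}))^2\le C\Vert w'\Vert_{I_j}^2$, and likewise for the traces from $x_j^{+}$ measured over $I_{j+1}$. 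The decisive feature is that the weights $\Delta h_j$ occurring in the numerator of $M(k,\beta_1)$ carry precisely the powers of the mesh size needed to absorb the factor $h_j^{-1}$ produced by the trace inequalities.

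The choice $\Delta h_j=\min\{h_j,h_{j+1}\}$ makes this absorption automatic across an interface: because $\Delta h_j\le h_j$ and $\Delta h_j\le h_{j+1}$ hold simultaneously, the contribution of each node $x_j$ to the numerator is dominated by $\Vert w'\Vert^2$ over the two neighbouring elements, with a constant independent of the individual lengths and of their ratio. Summing over all nodes and taking the supremum over $w$ then gives $M(k,\beta_1)\le C(k,\beta_1)<\infty$ uniformly in the mesh. I expect the main obstacle to be exactly this bookkeeping step: one must handle the kernel of the denominator, namely the piecewise constants, on which the numerator also vanishes so that the Rayleigh-type quotient is genuinely well defined, and one must confirm that no hidden dependence on $N$ or on $\epsilon$ slips in at the Shishkin transition point $x_{N/2}$, where adjacent element lengths change abruptly. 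The use of the minimum in $\Delta h_j$ is precisely what neutralises that abrupt change and keeps the bound on $M(k,\beta_1)$ uniform.
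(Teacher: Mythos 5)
Your proposal is correct and follows essentially the same route as the paper: reduce the corollary to the finiteness of $M(k,\beta_1)$ in Lemma~\ref{LLLLL}, and bound that Rayleigh quotient by a constant $C(k)$ via inverse/trace inequalities on the two elements adjacent to each node, with the weight $\Delta h_j=\min\{h_j,h_{j+1}\}$ absorbing the factor $(\Delta h_j)^{-1}$ so the bound is mesh-independent. Your additional remarks on the piecewise-constant kernel and on uniformity at the Shishkin transition point are sound points of care that the paper leaves implicit, but they do not alter the argument.
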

\begin{proof}
For $\forall w\in V_{h}$, by using the inverse inequality \citep[Theorem 3.2.6]{ciarlet2002finite}, 
\begin{equation*}
\begin{aligned}
\{w'\}_{j}^{2}\le C(\Vert w'\Vert^{2}_{L^{\infty}(I_{j})}+\Vert w'\Vert^{2}_{L^{ \infty}(I_{j+1})})\le C(\Delta h_{j})^{-1}(\Vert w'\Vert_{I_{j}}^{2}+\Vert w'\Vert_{I_{j+1}}^{2}).
\end{aligned}
\end{equation*}
In the same way, there is
\begin{equation*}
\begin{aligned}
(\Delta h_{j})^{2}[w'']_{j}^{2}\le C(\Delta h_{j})^{2}(\Vert w''\Vert^{2}_{L^{ \infty}(I_{j})}+\Vert w''\Vert_{L^{\infty}(I_{j+1})}^{2})\le C(\Delta h_{j})^{-1}(\Vert w'\Vert_{I_{j}}^{2}+\Vert w'\Vert^{2}_{I_{j+1}}).
\end{aligned}
\end{equation*}
Then, it is easy to obtain
\begin{equation*}
\begin{aligned}
\sum_{j=0}^{N}\Delta h_{j}(\{w'\}_{j}+\beta_{1}\Delta h_{j}[w'']_{j})^{2}
&\le C\sum_{j=0}^{N}\Delta h_{j}(\{w'\}_{j}^{2}+(\beta_{1}\Delta h_{j}[w'']_{j})^{2})\\
&\le C\sum_{j=0}^{N}\Delta h_{j}(\Delta h_{j})^{-1}(1+\beta_{1}^{2})(\Vert w'\Vert_{I_{j}}^{2}+\Vert w'\Vert^{2}_{I_{j+1}})\\
&\le C(1+\beta_{1}^{2})\sum_{j=0}^{N}\int_{I_{j}}(w')^{2}\mr{d}x\\
&\le C(k)\sum_{j=0}^{N}\int_{I_{j}}(w')^{2}\mr{d}x,
\end{aligned}
\end{equation*}
where $C(k)$ is a positive constant related to the degree $k$ of piecewise polynomials. Therefore, there exists a constant $C(k)$ to satisfy
$$\frac{\sum_{j=0}^{N}\Delta h_{j}(\{w'\}_{j}+\frac{\beta_{1}}{2}\Delta h_{j}[w'']_{j})^{2}}{\sum_{j=1}^{N}\int_{I_{j}}(w')^{2}\mr{d}x}\le C(k).$$ 
At this point, we can choose $\beta_{0, j} (j=0, 1, \cdots, N)$ by calculating $C(k)\frac{1}{\mu_{1}}+\mu_{2}$ to make the numerical flux \eqref{eq:A-1} admissibility.
\end{proof}
When $\beta_{1} = 0$, the DDG method shall degenerate into  the classical penalty method, in which $\beta_{0, j} (j=0, 1, \cdots, N)$ of sufficient size are required to stabilize the method. Now we will provide an alternative method to the lemma \ref{LLLLL} to calculate the appropriate value of $\beta_{0, j} (j=0, 1, \cdots, N)$ according to \citep[Lemma 2.2]{Ma1Sty2:2020-motified}. 
\begin{lemma}\label{PPPPPP}
For a fixed $k\ge 1$, \eqref{eq:A-1} with $\beta_{1} =0$ is admissible if
\begin{equation*}\label{PPPP-1}
\beta_{0}\ge\mu_{2}+\frac{1}{4\mu_{1}}\lambda_{max}(H^{-\frac{1}{2}}OH^{-\frac{1}{2}}), 
\end{equation*}
where $H$ is the Hilbert matrix $H= (\frac{1}{m+l-1})$ of size $k$ and $O$ is a $k\times k$ matrix with each entry to be $1$.
\end{lemma}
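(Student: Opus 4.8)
The plan is to reduce the admissibility inequality of Definition~\ref{eq: K-1} to a single \emph{local} trace estimate on one element, and then to recognise the optimal constant of that estimate as $\lambda_{\max}(H^{-\frac12}OH^{-\frac12})$. First I would insert the flux \eqref{eq:A-1} with $\beta_{1}=0$, i.e. $\widehat{w_h}(x_j)=\frac{\beta_{0,j}}{\Delta h_j}[w_h]_j+\{w_h'\}_j$ at the interior nodes, into the left-hand side of Definition~\ref{eq: K-1}. Collecting terms, the diffusive and penalty parts produce
\[
\mu_1\sum_{j=1}^{N}\|w_h'\|_{I_j}^2+\sum_{j}\frac{\beta_{0,j}}{\Delta h_j}[w_h]_j^2+\sum_{j}[w_h]_j\{w_h'\}_j ,
\]
so that, after subtracting the target penalty $\mu_2\sum_j[w_h]_j^2/\Delta h_j$, the only sign-indefinite contribution is the derivative cross term $\sum_j[w_h]_j\{w_h'\}_j$ (the one-sided boundary nodes $j=0,N$ and the remaining function-average interface term being either of a favourable sign or of lower order, and absorbed by a standard trace–Young argument). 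I would then complete the square in the jump $[w_h]_j$: with $\{w_h'\}_j$ held fixed, the minimum over $[w_h]_j$ of $\frac{\beta_{0,j}-\mu_2}{\Delta h_j}[w_h]_j^2+[w_h]_j\{w_h'\}_j$ equals $-\frac{\Delta h_j}{4(\beta_{0,j}-\mu_2)}\{w_h'\}_j^2$. This step is exactly where the factor $\tfrac14$ in the statement is born, and it reduces admissibility to the single scalar inequality $\mu_1\sum_j\|w_h'\|_{I_j}^2\ge\frac{1}{4(\beta_0-\mu_2)}\sum_j\Delta h_j\{w_h'\}_j^2$.

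The heart of the argument is a sharp trace estimate for the derivative $q:=w_h'\in\mathbb{P}_{k-1}(I_j)$. Mapping $I_j$ affinely onto the reference interval $[0,1]$ cancels the length $h_j$, so it suffices to bound $\tilde q(1)^2$ by $\int_0^1\tilde q^2\,\mr{d}x$ for $\tilde q\in\mathbb{P}_{k-1}[0,1]$. Writing $\tilde q(t)=\sum_{i=0}^{k-1}c_it^i$ and collecting the coefficients into $c\in\mathbb{R}^{k}$, one computes $\int_0^1\tilde q^2\,\mr{d}x=c^{\top}Hc$ with $H=(\tfrac{1}{m+l-1})$ the $k\times k$ Hilbert matrix, and $\tilde q(1)^2=(\sum_i c_i)^2=c^{\top}Oc$ with $O$ the all-ones matrix. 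Hence the Rayleigh quotient $\tilde q(1)^2/\int_0^1\tilde q^2\,\mr{d}x=c^{\top}Oc/c^{\top}Hc$ is maximised by the largest generalised eigenvalue of the pencil $(O,H)$, namely $\lambda_{\max}(H^{-\frac12}OH^{-\frac12})$. Undoing the scaling gives, for each element, $h_j\,w_h'(x_j)^2\le\lambda_{\max}(H^{-\frac12}OH^{-\frac12})\,\|w_h'\|_{I_j}^2$, and the analogous bound at the left endpoint.

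Finally I would assemble the pieces. Using $\{w_h'\}_j^2\le\tfrac12(w_h'(x_j^+)^2+w_h'(x_j^-)^2)$, the mesh relations $\Delta h_j\le h_j$ and $\Delta h_j\le h_{j+1}$, and the trace estimate above, each term $\Delta h_j\{w_h'\}_j^2$ is bounded by $\tfrac12\lambda_{\max}(H^{-\frac12}OH^{-\frac12})$ times the energy $\|w_h'\|_{I_j}^2+\|w_h'\|_{I_{j+1}}^2$ on the two adjacent cells; summing over $j$, where each element is counted twice, yields $\sum_j\Delta h_j\{w_h'\}_j^2\le\lambda_{\max}(H^{-\frac12}OH^{-\frac12})\sum_j\|w_h'\|_{I_j}^2$. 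Combined with the reduced inequality of the first step, this shows that $\beta_0\ge\mu_2+\frac{1}{4\mu_1}\lambda_{\max}(H^{-\frac12}OH^{-\frac12})$ forces the right-hand side to be nonnegative for suitable $\mu_1\in(0,1)$, $\mu_2\in(0,1]$, which is precisely admissibility. I expect the main obstacle to be the trace estimate itself: identifying $\int_0^1\tilde q^2\,\mr{d}x$ and $\tilde q(1)^2$ with the Hilbert and all-ones quadratic forms and pinning down the constant as $\lambda_{\max}(H^{-\frac12}OH^{-\frac12})$, together with the careful bookkeeping needed so that the averaging factor $\tfrac12$, the completing-the-square factor, and the double counting in the global sum combine to exactly $\tfrac{1}{4\mu_1}\lambda_{\max}(H^{-\frac12}OH^{-\frac12})$ rather than a larger constant.
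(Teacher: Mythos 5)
The paper offers no proof of this lemma at all: its ``proof'' is a one-line citation of \citep[Lemma 2.2]{Ma1Sty2:2020-motified}. So the only meaningful comparison is with the argument behind that citation, which your proposal essentially reconstructs: identifying $\int_0^1\tilde q^2\,\mr{d}x$ with the Hilbert form $c^{\top}Hc$ and $\tilde q(1)^2$ with $c^{\top}Oc$ for $\tilde q\in\mathbb{P}_{k-1}$, obtaining the sharp trace constant $\lambda_{\max}(H^{-\frac12}OH^{-\frac12})$, and producing the $\tfrac14$ by a Young/completing-the-square step. That core is correct and is indeed the heart of the cited result.

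The genuine gap is in your very first reduction, and it sits exactly where the factor $4$ lives. In Definition~\ref{eq: K-1} (reading the evident typo $\{w_h\}_j$ as $\{w_h'\}_j$, which is forced by consistency with $B_1$ and with Lemma~\ref{LLLLL}), the interface sum is $\widehat{w_h}[w_h]_j+[w_h]_j\{w_h'\}_j$; inserting the flux \eqref{eq:A-1} with $\beta_1=0$ this equals $\frac{\beta_{0,j}}{\Delta h_j}[w_h]_j^2+2[w_h]_j\{w_h'\}_j$, i.e.\ the derivative-average cross term appears \emph{twice}. You keep one copy and discard the other as ``of favourable sign or of lower order''; it is neither — it is literally the same term. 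With both copies, the nodal minimum over $[w_h]_j$ is $-\frac{\Delta h_j}{\beta_{0,j}-\mu_2}\{w_h'\}_j^2$, four times your bound, and your pipeline then only delivers $\beta_0\ge\mu_2+\frac{1}{\mu_1}\lambda_{\max}(H^{-\frac12}OH^{-\frac12})$, which is no better than what Lemma~\ref{LLLLL} already gives. Moreover the loss is not repairable within the paper's own definition: take $k=1$ (so $\lambda_{\max}=1$), $\mu_1=\tfrac12$, a uniform mesh, and $v|_{I_i}(x)=x-(i-1)/(N-1)$ (slope $1$ everywhere, interior jumps $-1/(N-1)$, zero boundary jumps); the two-copy inequality requires $\beta_0\ge\mu_2+\frac{N-1}{N}(2-\mu_1)\to\mu_2+\tfrac32$, strictly above the claimed threshold $\mu_2+\frac{1}{4\mu_1}=\mu_2+\tfrac12$. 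In other words, the constant $\tfrac14$ is provable only for the one-copy notion of admissibility used in the source being reconstructed, where your computation is exactly right (and sharp at $\mu_1=\tfrac12$); under this paper's Definition~\ref{eq: K-1} the statement itself fails, and the same is true at the boundary nodes, which you also wave away: since \eqref{eq:A-1} carries no penalty at $j=0,N$ while the right-hand side of Definition~\ref{eq: K-1} includes $[v]_0,[v]_N$, the choice $v\equiv1$ violates the inequality for every $\beta_0$. Your proposal asserts this mismatch away rather than detecting and resolving it.
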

\begin{proof}
According to the theories in \citep[Lemma 2.2]{Ma1Sty2:2020-motified}, Lemma \ref{PPPPPP} can be easily obtained.
\end{proof}
From this result we can provide a possible option for $\beta_{0, j}$ when $k=1$. For instance, we take $\mu_{2}=1$ and $\mu_{1}=1/2$, and let $\beta_{0, j}$ to be an integer as
$$\beta_{0, j} =\left[\frac{1}{2}\lambda_{max}(H^{-\frac{1}{2}}OH^{-\frac{1}{2}})\right]+1,$$
in which $[y]$ is the smallest integer bigger than or equal to $y$. Then for $k\ge 1$, there is $\beta_{0, j}=[k^{2}/2]+1$.
\vspace{-0.5cm}
\subsection{Galerkin orthogonality and stability of the DDG method}
\begin{lemma}
Assume that the solution $w$ of \eqref{eq:SPP-1d} satisfy $w\in H^{k+1}(\Theta)$, then $B(\cdot,\cdot)$ given in \eqref{eq:weak form-1d} has the following Galerkin orthogonality,
$$B(w-w_{h}, v)=0,\quad\text{for all $v \in V_{h}$},$$
where $w_{h}$ is the solution of \eqref{eq:weak form-1d}.
\end{lemma}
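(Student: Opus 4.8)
The plan is to reduce the statement to the \emph{consistency} of the scheme. Since $w_{h}$ solves \eqref{eq:weak form-1d}, that is $B(w_{h},v)=F(v)$ for every $v\in V_{h}$, the bilinearity of $B(\cdot,\cdot)$ shows that it suffices to prove the consistency identity $B(w,v)=F(v)$ for all $v\in V_{h}$; subtracting the two relations then yields $B(w-w_{h},v)=B(w,v)-B(w_{h},v)=0$ at once. Thus the whole argument rests on verifying that the exact solution $w$, when inserted into the DDG bilinear form, reproduces the right-hand side.

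First I would exploit the regularity $w\in H^{k+1}(\Theta)\subset C^{1}(\overline{\Theta})$ together with the boundary conditions $w(0)=w(1)=0$ to observe that every jump of $w$ vanishes: $[w]_{j}=0$ for $1\le j\le N-1$, while $[w]_{0}=w(x_{0}^{+})=0$ and $[w]_{N}=-w(x_{N}^{-})=0$. Because $w$ is continuous with a continuous derivative, the numerical fluxes degenerate to the exact pointwise values: \eqref{eq:A-1} gives $\widehat{w}(x_{j})=\{w'\}_{j}=w'(x_{j})$, and \eqref{eq:A-2} gives $\widetilde{w}(x_{j})=w(x_{j})$ at every node, including the two endpoints where the special jump/average conventions apply. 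By Theorem \ref{Thm:regularity} the solution is smooth enough that also $[w'']_{j}=0$, and for $k=1$ the coefficient $\beta_{1}$ is taken to be zero, so the term $\beta_{1}\Delta h_{j}[w'']_{j}$ in \eqref{eq:A-1} plays no role.

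Next I would treat the three pieces of $B$ separately. For the diffusion part $B_{1}(w,v)$ I integrate $\int_{I_{j}}\epsilon w'v'\,\mr{d}x$ by parts on each element; the resulting volume term is $-\int_{I_{j}}\epsilon w''v\,\mr{d}x$, and the collected endpoint contributions telescope into $-\epsilon\sum_{j=0}^{N}w'(x_{j})[v]_{j}$, which cancels exactly against the flux term $\epsilon\sum_{j=0}^{N}\widehat{w}(x_{j})[v]_{j}$ once $\widehat{w}(x_{j})=w'(x_{j})$ is used, while the penalty contributions $[w]_{j}\{v'\}_{j}$ drop because $[w]_{j}=0$; hence $B_{1}(w,v)=\int_{\Theta}(-\epsilon w'')v\,\mr{d}x$. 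For the convection part $B_{2}(w,v)$ I integrate $-\int_{I_{j}}aw v'\,\mr{d}x$ by parts; writing $g=aw$ (continuous), the boundary terms collapse to $\sum_{j=0}^{N}a(x_{j})w(x_{j})[v]_{j}$, the extra $a'w$ contribution produced by the integration by parts cancels the separate term $-\sum_{j}\int_{I_{j}}a'wv\,\mr{d}x$ in \eqref{eq:B-2}, and the interface flux term cancels the telescoped boundary term after $\widetilde{w}(x_{j})=w(x_{j})$ is inserted, leaving $B_{2}(w,v)=\int_{\Theta}aw'v\,\mr{d}x$. Adding the reaction part $B_{3}(w,v)=\int_{\Theta}bwv\,\mr{d}x$ and invoking the differential equation in \eqref{eq:SPP-1d} then gives $B(w,v)=\int_{\Theta}(-\epsilon w''+aw'+bw)v\,\mr{d}x=\int_{\Theta}fv\,\mr{d}x=F(v)$.

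The main obstacle is bookkeeping rather than conceptual: it lies in the careful sign tracking of the telescoping endpoint sums, where the contribution from element $I_{j}$ at $x_{j}^{-}$ must be matched against that from $I_{j+1}$ at $x_{j}^{+}$ and repackaged through the jump definition $[v]_{j}=v(x_{j}^{+})-v(x_{j}^{-})$, and especially in checking the two boundary nodes $x_{0}$ and $x_{N}$, where $[v]_{0}=v(x_{0}^{+})$ and $[v]_{N}=-v(x_{N}^{-})$ differ from the interior convention and must be reconciled with the one-sided flux prescriptions $\widehat{w}(x_{0})=w'(x_{0})$, $\widehat{w}(x_{N})=w'(x_{N})$ and $\widetilde{w}(x_{0})=0=w(0)$, $\widetilde{w}(x_{N})=w(x_{N}^{-})=w(1)$. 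Provided these interface terms are organized correctly, all boundary contributions cancel, the consistency identity follows, and Galerkin orthogonality is then immediate.
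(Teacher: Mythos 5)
Your proof is correct and takes essentially the same route as the paper: the paper's own ``proof'' is merely a citation of Lemma~2.4 in the Ma--Stynes (2020) reference, whose content is precisely the consistency argument you spelled out (element-wise integration by parts, telescoping of the interface terms against the numerical fluxes evaluated at the smooth exact solution, then subtraction of the two discrete identities). Your bookkeeping at the boundary nodes and your handling of the $[w'']_{j}$ term (smoothness for $k\ge 2$, and $\beta_{1}=0$ when $k=1$) close the only delicate points, so there is no gap.
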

\begin{proof}
By utilizing the similar arguments in \citep[Lemma 2.4]{Ma1Sty2:2020-motified}, we can derive this lemma easily.
\end {proof}
Then  from $B(\cdot,\cdot)$,  the energy norm is denoted by 
\begin{equation}\label{eq:energy norm}
\Vert v \Vert^{2}_{E}:=\epsilon \sum_{j=1}^{N} \Vert v' \Vert^{2}_{I_{j}}+\sum_{j=1}^{N}\gamma \Vert v \Vert^{2}_{I_j}+\epsilon\sum_{j=0}^{N}\frac{\beta_{0, j}}{\Delta h_{j}}[v]_{j}^{2},\quad\text{for $\forall v \in V_{h}$}.
\end{equation}
Here for obtaining the desired supercloseness result, we take $\beta_ {0, j}$ as
\begin{equation}\label{eq:A-4}
\beta_{0, j}=\left\{
\begin{aligned}
&\beta_{1}^{2}\epsilon^{-1}N^{-1},\quad \text{for $j=0, 1, \cdots, N/2-1$},\\
&\beta_{1}^{2},\quad \text{for $j= N/2$},\\
&\beta_{1}^{2}N,\quad \text{for $j= N/2+1, \cdots, N$}.
\end{aligned}
\right.
\end{equation}
where $\beta_{1}$ is a positive constant.
\begin{lemma}
Assume that the numerical flux is admissible and the condition \eqref{eq:SPP-condition-1} holds. Then, there is
\begin{equation}\label{eq:energy norm 1}
B(v_{h}, v_{h})\ge\Vert v_{h} \Vert^{2}_{E},\quad\forall v_{h}\in V_{h},
\end{equation}
where $B(\cdot,\cdot)$ is defined in \eqref{eq:weak form-1d} and $\Vert\cdot\Vert_{E}$ is defined as \eqref{eq:energy norm}.
\end{lemma}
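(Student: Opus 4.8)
The plan is to verify the coercivity inequality directly: I set $w_h=v_h$ in $B(\cdot,\cdot)=B_1+B_2+B_3$ and estimate the three pieces separately so that their sum reproduces the three contributions of $\|v_h\|_E^2$, namely the weighted norm $\epsilon\sum_j\|v_h'\|_{I_j}^2$, the reaction part $\gamma\sum_j\|v_h\|_{I_j}^2$, and the jump penalty $\epsilon\sum_{j=0}^N\frac{\beta_{0,j}}{\Delta h_j}[v_h]_j^2$.

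First I would isolate the reaction part. In $B_2(v_h,v_h)$ the term $-\sum_j\int_{I_j}a\,v_h v_h'\,\mathrm{d}x$ equals $-\tfrac12\sum_j\int_{I_j}a\,(v_h^2)'\,\mathrm{d}x$; integrating by parts on each $I_j$ produces a volume term $+\tfrac12\sum_j\int_{I_j}a'v_h^2\,\mathrm{d}x$ together with interface terms. Combining the $+\tfrac12\int a'v_h^2$ with the existing $-\int a'v_h^2$ in $B_2$ and with $B_3=\sum_j\int_{I_j}b\,v_h^2\,\mathrm{d}x$ gives the volume contribution $\sum_j\int_{I_j}(b-\tfrac12 a')v_h^2\,\mathrm{d}x\ge\gamma\sum_j\|v_h\|_{I_j}^2$ by the structural hypothesis \eqref{eq:SPP-condition-1}.

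Next, the interface terms from the integration by parts must be matched against the upwind-biased flux term $-\sum_{j}a(x_j)\widetilde{v_h}(x_j)[v_h]_j$. Writing $v_h(x_j^{\pm})=\{v_h\}_j\pm\tfrac12[v_h]_j$ one finds $\widetilde{v_h}(x_j)=\{v_h\}_j-(\theta-\tfrac12)[v_h]_j$, so the products $a(x_j)\{v_h\}_j[v_h]_j$ coming from the two sources cancel exactly at the interior nodes; what survives are the boundary squares $\tfrac12 a(x_0)v_h(x_0^+)^2$ and $\tfrac12 a(x_N)v_h(x_N^-)^2$ together with $\sum_{j=1}^{N-1}(\theta-\tfrac12)a(x_j)[v_h]_j^2$, all nonnegative since $a\ge\alpha>0$ and $\tfrac12\le\theta\le1$. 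Hence $B_2(v_h,v_h)+B_3(v_h,v_h)\ge\gamma\sum_j\|v_h\|_{I_j}^2$. I expect this interface bookkeeping — tracking every one-sided limit and confirming the cancellation — to be the most error-prone step, and the role of the constraint $\theta\ge\tfrac12$ becomes visible precisely here.

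Finally, for $B_1(v_h,v_h)=\epsilon\sum_j\|v_h'\|_{I_j}^2+\epsilon\sum_{j=0}^N(\widehat{v_h}(x_j)[v_h]_j+[v_h]_j\{v_h'\}_j)$ I would invoke the admissibility of the flux (Definition~1), which is designed exactly so that the interface sum, after absorbing a controllable multiple of $\sum_j\|v_h'\|_{I_j}^2$, dominates the jump penalty $\sum_{j=0}^N\frac{[v_h]_j^2}{\Delta h_j}$. Concretely, expanding $\widehat{v_h}(x_j)$ exposes the diagonal term $\sum_j\frac{\beta_{0,j}}{\Delta h_j}[v_h]_j^2$ explicitly, while the cross terms $2\{v_h'\}_j[v_h]_j+\beta_1\Delta h_j[v_h'']_j[v_h]_j$ are absorbed using the bound on $M(k,\beta_1)$ from Lemma~\ref{LLLLL} (equivalently the inverse-inequality estimate established above) together with a weighted Young inequality; the delicate point is to spend only enough of the $\epsilon\sum_j\|v_h'\|_{I_j}^2$ energy and of the penalty, with the freedom in $\mu_1,\mu_2$, to retain the full energy-norm contributions. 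This yields $B_1(v_h,v_h)\ge\epsilon\sum_j\|v_h'\|_{I_j}^2+\epsilon\sum_{j=0}^N\frac{\beta_{0,j}}{\Delta h_j}[v_h]_j^2$, and adding the three estimates gives $B(v_h,v_h)\ge\|v_h\|_E^2$.
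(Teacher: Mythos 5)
Your treatment of the convection--reaction part is correct and is exactly the standard argument (the paper itself gives no derivation here, only a citation of Lemma~2.3 of the Ma--Stynes reference, which proceeds the same way): integrating $-\sum_j\int_{I_j}a\,v_hv_h'\,\mr{d}x$ by parts, using $\widetilde{v_h}(x_j)=\{v_h\}_j-(\theta-\tfrac12)[v_h]_j$ to cancel the $a(x_j)\{v_h\}_j[v_h]_j$ products, and invoking \eqref{eq:SPP-condition-1}, $a\ge\alpha>0$, $\theta\ge\tfrac12$ gives $B_2(v_h,v_h)+B_3(v_h,v_h)\ge\gamma\sum_j\Vert v_h\Vert_{I_j}^2$. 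The genuine gap is your final estimate for $B_1$. The inequality you assert,
\begin{equation*}
B_1(v_h,v_h)\ \ge\ \epsilon\sum_{j=1}^{N}\Vert v_h'\Vert_{I_j}^2+\epsilon\sum_{j=0}^{N}\frac{\beta_{0,j}}{\Delta h_j}[v_h]_j^2,
\end{equation*}
is false, not merely unproved. Take $v_h\equiv0$ outside $I_2$ and linear on $I_2$ with $v_h(x_1^+)=\delta\neq0$, $v_h(x_2^-)=0$. Then the only nonzero jump is $[v_h]_1=\delta$, $[v_h'']_j\equiv0$, $\{v_h'\}_1=-\delta/(2h_2)$, and
\begin{equation*}
B_1(v_h,v_h)=\epsilon\frac{\delta^2}{h_2}+\epsilon\frac{\beta_{0,1}}{\Delta h_1}\delta^2+2\epsilon\{v_h'\}_1[v_h]_1
=\epsilon\frac{\beta_{0,1}}{\Delta h_1}\delta^2\ <\ \epsilon\frac{\delta^2}{h_2}+\epsilon\frac{\beta_{0,1}}{\Delta h_1}\delta^2 .
\end{equation*}
The indefinite cross terms $2\epsilon\{v_h'\}_j[v_h]_j$ can consume the entire gradient contribution; this is precisely why admissibility is formulated with constants $\mu_1\in(0,1)$, $\mu_2\in(0,1]$, and all it delivers is
\begin{equation*}
B_1(v_h,v_h)\ \ge\ (1-\mu_1)\,\epsilon\sum_{j=1}^{N}\Vert v_h'\Vert_{I_j}^2+\mu_2\,\epsilon\sum_{j=0}^{N}\frac{[v_h]_j^2}{\Delta h_j},
\end{equation*}
i.e.\ the degraded coefficient $1-\mu_1<1$ on the gradient term and the jump weight $\mu_2\le1$ in place of $\beta_{0,j}$. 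You cannot, as you put it, ``retain the full energy-norm contributions''.

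The discrepancy is not a harmless constant. The fluxes \eqref{eq:A-1} at $j=0$ and $j=N$ are simply $v_h'$, so the weights $\beta_{0,0}$ and $\beta_{0,N}$ appearing in \eqref{eq:energy norm} never enter $B(v_h,v_h)$ at all; the only positive boundary contributions available are the $O(1)$ quantities $\tfrac12a(x_0)[v_h]_0^2$ and $\tfrac12a(x_N)[v_h]_N^2$, which your convection computation produced and then discarded, plus $\mu_2$-weighted jumps from admissibility. With the choice \eqref{eq:A-4}, however, $\epsilon\beta_{0,N}/\Delta h_N=\alpha\beta_1^2N^2/(2\sigma\ln N)$, which none of these terms can dominate: testing with $v_h\equiv1$ on $I_N$ and zero elsewhere shows $B(v_h,v_h)$ falls short of $\Vert v_h\Vert_E^2$ by essentially this amount. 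So adding your three estimates cannot yield $B(v_h,v_h)\ge\Vert v_h\Vert_E^2$ with \eqref{eq:energy norm} read literally; what your argument (correctly completed) proves is coercivity with respect to the norm in which the coefficients $1$ and $\beta_{0,j}$ are replaced by $1-\mu_1$ and $\mu_2$, augmented by the boundary terms inherited from the convection part --- which is the form in which such results are stated in the cited reference. To repair the write-up, either prove the lemma for that modified norm, or add explicit hypotheses guaranteeing that each $\epsilon\beta_{0,j}/\Delta h_j$ is dominated by the positive terms actually present in $B(v_h,v_h)$.
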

\begin{proof}
The specific derivation can be found in \citep[Lemma 2.3]{Ma1Sty2:2020-motified}.
\end{proof}
%
%
%
\section{Interpolation and convergence analysis}
\subsection{Interpolation}
Now we design a new interpolation operator used for our convergence analysis. Set
\begin{equation}\label{eq:pi u}
(\pi w)|_{I_{j}}=
\left\{
\begin{split}
&(P^{(\theta)} w)|_{I_{j}}\quad &&\text{if $j=1, \ldots, N/2$},\\
& (G_{k} w)|_{I_{j}},\quad &&\text{if $ j=N/2+1, \ldots, N$},
\end{split}
\right.
\end{equation}
where $P^{(\theta)} w \in  V_{h}$ is a global projection of $w$ in the finite element space, and $G_{k} w$ is the Lagrange interpolation of $w$ at $k+1$ Gau{\ss} Lobatto points. Blow, we give the definitions and correspending properties of these projections.

When $k\ge 1$,  Gau{\ss} Radau projection $(P_{h}w)\vert_{I_{j}}\in \mathbb{P}_{k}(I_{j})$ is  denoted as (see \cite{Che1:2021-O}): For $j=1, 2, 3, \cdots, N$, 
\begin{equation}\label{eq:J-1}
\begin{aligned}
&\int_{I_j}(P_{h}w)v\mr{d}x=\int_{I_j}wv\mr{d}x,\quad \forall v\in \mathbb{P}_{k-1}(I_{j}),\\
&(P_{h}w)(x_{j}^{-})=w(x_{j}^{-}).
\end{aligned}
\end{equation}

For $w\in H^{1}(\mathcal{T}_{N})$, the projection
$P^{(\theta)} w$ is denoted as the element of $V_{h}$ that satisfies (see \citep[Section 2.4.1]{Men1Shu2Wu3:2016-O})
\begin{align}
&\int_{I_{j}}(P^{(\theta)}w)v\mr{d}x = \int_{I_{j}}wv\mr{d}x,\quad\forall v \in \mathbb{P}_{k-1}(I_{j}),\label{eq: C-1}\\
&\widetilde{P^{(\theta)}w}(x_{j})= \widetilde{w}(x_{j}),\quad \text{for $j=1, 2, \ldots, N-1$},\label{eq: C-2}\\
&(P^{(\theta)}w)(x_{N}^{-})=w(x_{N}^{-}).\label{eq: C-3}
\end{align}
Here from the upwind-biased numerical flux \eqref{eq:A-2}, we define $\widetilde{w} :=\theta w(x_{j}^{-}) + (1-\theta)  w(x_{j}^{+})$ with $\frac{1}{2}\le\theta\le 1$ for any $w\in H^{1}(\mathcal{T}_{N})$. In particular, when $\theta = 1$, it is obvious that $P^{(\theta)}w(x_{j}^{-}) = w(x_{j}^{-}), j=1, 2, \cdots, N$ and the projection $P^{(\theta)}w$ reduces to the standard Gau{\ss} Radau projection $P_{h} w$, see \cite{Men1Shu2Wu3:2016-O} for more details.

Then on $I_{j}=[x_{j-1}, x_{j}]$, let the Gau{\ss} Lobatto points $x_{j-1}=z_{0} <z_{1} <\cdots <z_{k} = x_{j}$, where $z_{1}, z_{2}, \cdots, z_{k-1}$ are zeros of the derivative of the Legendre polynomial whose degree $k\ge 1$ on $I_{j}$ \cite{Ma1Zha2:2023-S}.  Assume that $(G_{k} w)|_{I_{j}}$ for $j=1, \ldots ,N$ is the Lagrange interpolation using the points $\{z_{s}\}_{s=0}^{k}$ as the interpolation points, then we have 
\begin{equation}\label{eq:z v}
|(w'-(G_{k}w)', v')_{I_{j}}|\le Ch_{j}^{k+1}|w|_{k+2, {I_{j}}}|v|_{1, I_{j}},\quad\forall v \in \mathbb{P}_{k}(I_{j}),
\end{equation}
with $w\in H^{k+2}(I_{j})$.
\begin{remark}\label{special}
Now we briefly introduce the situation at $x_{\frac{N}{2}}$. Firstly, on the left side at the point $x_{\frac{N}{2}}$, by using \eqref{eq:pi u} and \eqref{eq: C-3}, one obtains
\begin{equation}\label{eq: Kk-11}
(w-P^{(\theta)} w)(x_{\frac{N}{2}}^{-})=0.
\end{equation}
On the other hand, on the right side at the point $x_{\frac{N}{2}}$, by the definition of Gau{\ss} Lobatto projection, 
\begin{equation}\label{eq: Kk-12}
(w-G_{k} w)(x_{\frac{N}{2}}^{+})=0.
\end{equation}
Then, combined \eqref{eq:pi u}, \eqref{eq: Kk-11}, \eqref{eq: Kk-12} and the definition of jump $[\cdot]$,  we have
\begin{equation*}
[(w-\pi w)]_{\frac{N}{2}}=0.
\end{equation*}
\end{remark}
\subsection{Interpolation error}
Now we will present existence and approximation properties of $P^{(\theta)} w$.
\begin{lemma}\label{FFFFFFF}
\citep[Lemma 2.6]{Men1Shu2Wu3:2016-O}  Suppose that $w \in H^{k+1}(\mathcal{T}_{N})$ is sufficiently smooth. Then, there exists a unique $P^{(\theta)} w$ satisfying the conditions \eqref{eq: C-1}, \eqref{eq: C-2} and \eqref{eq: C-3}. Furthermore, one has
\begin{align}
&\Vert w-P^{(\theta)}w \Vert_{L^{\infty}(I_j)}\le C(\theta) h^{k+1}\Vert w \Vert_{W^{k+1, \infty}(\mathcal{T}_{N})},\label{eq: interpolation-theory}\\
&\Vert w-P^{(\theta)}w \Vert_{I_{j}} \le C(\theta) h^{k+\frac{3}{2}}\Vert w \Vert_{W^{k+1, \infty}(\mathcal{T}_{N})}, \label{eq:interpolation-theory-1}
\end{align}
where $h = \max\limits_{j} h_{j}, j=1, 2, \cdots, N$, $C = C(\theta)$ is independent of $\epsilon$, the element $I_{j}$ and $h$.
\end{lemma}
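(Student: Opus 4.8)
The plan is to prove existence and uniqueness first, by a right-to-left sweep through the elements, and then to obtain the two approximation bounds by comparing $P^{(\theta)}w$ with the purely local Gau{\ss} Radau projection and tracking how the interface error propagates.

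First I would note that on a single element $I_{j}$ the defining data of $P^{(\theta)}w|_{I_{j}}\in\mathbb{P}_{k}(I_{j})$ consist of the $k$ moment conditions \eqref{eq: C-1} together with one nodal value at $x_{j}^{-}$, and that these $k+1$ conditions are unisolvent on $\mathbb{P}_{k}(I_{j})$: if $p\in\mathbb{P}_{k}$ has vanishing moments against $\mathbb{P}_{k-1}$, then $p$ is a scalar multiple of the degree-$k$ Legendre polynomial, which does not vanish at the right endpoint, so $p(x_{j}^{-})=0$ forces $p=0$. Starting from $I_{N}$, where \eqref{eq: C-3} supplies the nodal value $w(x_{N}^{-})$ directly, this determines $P^{(\theta)}w|_{I_{N}}$ uniquely. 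Proceeding to $I_{N-1},\ldots,I_{1}$, at each interface $x_{j}$ the value $(P^{(\theta)}w)(x_{j}^{+})$ is already known from the element to the right, so the interface condition \eqref{eq: C-2}, namely $\theta(P^{(\theta)}w)(x_{j}^{-})+(1-\theta)(P^{(\theta)}w)(x_{j}^{+})=\widetilde{w}(x_{j})$, can be solved for the missing nodal value $(P^{(\theta)}w)(x_{j}^{-})$ precisely because $\theta\ge 1/2>0$. This yields existence and uniqueness and explains the global character of the projection.

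For the error estimate I would set $\eta:=w-P^{(\theta)}w$ and compare it on each $I_{j}$ with the error $w-P_{h}w$ of the local Gau{\ss} Radau projection \eqref{eq:J-1}. Since both projections share the moments \eqref{eq: C-1} and differ only in their right-endpoint value, $P^{(\theta)}w-P_{h}w$ equals $-\eta(x_{j}^{-})$ times the fixed degree-$k$ polynomial with zero moments and unit value at $x_{j}^{-}$ (a scaled Legendre polynomial, bounded independently of $h$). Combining this with the classical bound $\Vert w-P_{h}w\Vert_{L^{\infty}(I_{j})}\le Ch^{k+1}\Vert w\Vert_{W^{k+1,\infty}(\mathcal{T}_{N})}$ and the relation $\eta(x_{j}^{-})=-\frac{1-\theta}{\theta}\eta(x_{j}^{+})$ from \eqref{eq: C-2}, I obtain a recursion for the endpoint errors that propagates leftward from $\eta(x_{N}^{-})=0$ with amplification factor $\frac{1-\theta}{\theta}\le 1$. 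Resolving this recursion gives $\Vert\eta\Vert_{L^{\infty}(I_{j})}\le C(\theta)h^{k+1}\Vert w\Vert_{W^{k+1,\infty}(\mathcal{T}_{N})}$, which is \eqref{eq: interpolation-theory}; the $L^{2}$ bound \eqref{eq:interpolation-theory-1} then follows immediately from $\Vert\eta\Vert_{I_{j}}\le |I_{j}|^{1/2}\Vert\eta\Vert_{L^{\infty}(I_{j})}$, the factor $|I_{j}|^{1/2}\sim h^{1/2}$ accounting for the extra half power $h^{k+3/2}$.

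The main obstacle is this last step, namely controlling the accumulation of the interface error across the whole mesh. The recursion is only non-expansive (factor $\le 1$) rather than strictly contractive: for $\theta>1/2$ the geometric decay $\frac{1-\theta}{\theta}<1$ keeps $C(\theta)$ finite but forces it to blow up as $\theta\downarrow 1/2$, while the borderline central case $\theta=1/2$ needs a more careful argument exploiting the alternating sign of the Legendre polynomial at the endpoints to avoid an $\mathcal{O}(N)$ loss. By contrast, the unisolvence and the scale-invariance of the reference Legendre factor are routine, so the propagation analysis is the only genuinely delicate point.
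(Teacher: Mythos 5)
Your proposal is correct for $\theta>1/2$ and is essentially the paper's own argument: both compare $P^{(\theta)}w$ with the local Gau{\ss} Radau projection $P_{h}w$, observe that their difference on each cell has vanishing moments against $\mathbb{P}_{k-1}$ and is therefore a multiple of the degree-$k$ Legendre polynomial, and determine those multiples from the interface conditions by propagating backward from $x_{N}$ — the paper merely writes your right-to-left sweep as a bidiagonal linear system $\mathbb{A}\vec{c}=\vec{b}$ with $\det\mathbb{A}=\theta^{N-1}$ instead of a recursion. If anything, your treatment of the error accumulation is the more careful one: the matrix displayed in the paper as $\mathbb{A}^{-1}$ is not the true inverse (the inverse of a bidiagonal matrix is dense upper triangular, with entries decaying like $\left((1-\theta)/\theta\right)^{j-i}$, which is exactly your geometric recursion), and your point that the constant blows up as $\theta\downarrow 1/2$ is precisely why the paper's proof and the cited reference of Meng, Shu and Wu both exclude the borderline case $\theta=1/2$ rather than repair it.
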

\begin{proof}
For $w\in H^{1}(\mathcal{T}_{N})$, let $P_{h}w$ and $P^{(\theta)}w$ be denoted as \eqref{eq:J-1} and \eqref{eq: C-1}. Denote $P^{(\theta)}w - w = P^{(\theta)}w - P_{h}w + P_{h}w - w := E + P_{h}w - w$. Now we only prove the existence and uniqueness of $E$. 
First, combined \eqref{eq:J-1}, \eqref{eq: C-1}, \eqref{eq: C-2} and \eqref{eq: C-3}, we obtain
\begin{align}
&\int_{I_{j}}Ev_{h}\mr{d}x = 0, \quad \forall v_{h}\in \mathbb{P}_{k-1}(I_{j}),\label{SSSS-1}\\
&\widetilde{E} = (1 - \theta)(w - P_{h}w)(x_{j}^{+})\quad \text{at $x_{j},\quad j = 1, \cdots, N-1$},\label{SSSS-2}\\
&E(x_{j}^{-})= 0,\quad j = N\label{SSSS-3}. 
\end{align}
From $E\in V_{h}$, the restriction of $E$ to each $I_{j}$, denoted by $E_{j}$, can be expressed as $E_{j}(x)=\sum\limits_{l=0}^{k}c_{j, l}P_{j, l}(x)=\sum\limits_{l=0}^{k}c_{j, l}P_{l}(\hat{x})$, in which $P_{j, l}(x)= P_{l}\left(\frac{2\left(x-x_{j-\frac{1}{2}}\right)}{h_{j}}\right)=P_{l}(\hat{x})$. Here $P_{l}(\hat{x})$ is the $l$th-order Legendre polynomials that are orthogonal on $\bar{\Theta}$. 
Then from the orthogonality of the Legendre polynomials and  \eqref{SSSS-1},
$$c_{j, l} = 0,\quad j = 1, 2, \cdots, N,\quad l = 0, 1, \cdots, k- 1.$$
Thus, $E_{j} (x)= c_{j, k}P_{k}(\hat{x})$. Then noticing that $P_{k}(\pm 1) = (\pm 1)^{k}$, from \eqref{SSSS-2} and \eqref{SSSS-3}, we have the following algebra system of linear equations
\begin{equation*}
\mathbb{A}_{N\times N} =\left[
\begin{array}{cccccc}
\theta &(1-\theta)(-1)^{k} &0&\cdots &\cdots &0\\
0 &\theta &(1-\theta)(-1)^{k}&0&\cdots&0\\
0&0&\theta &(1-\theta)(-1)^{k}&0&0\\
\vdots&\vdots&\qquad\ddots&\quad\ddots&&\vdots\\
\vdots&\vdots&&\ddots&\ddots&0\\
0&0&&&\theta &(1-\theta)(-1)^{k}\\
0&0&\cdots&\cdots&0&1
\end{array}
\right]
\end{equation*}
\begin{equation*}
{\vec{c}}_{N\times 1} =\left(\begin{array}{c}
c_{1, k}\\
c_{2, k}\\
\vdots\\
c_{N-1, k}\\
c_{N, k}
\end{array}
\right),
\qquad{\vec{b}}_{N\times 1}=
\left(
\begin{array}{c}
\eta_{1}\\
\eta_{2}\\
\vdots\\
\eta_{N-1}\\
0
\end{array}
\right)
\end{equation*}
denoted by $\mathbb{A}_{N\times N} {\vec{c}}_{N\times 1} = {\vec{b}}_{N\times 1}$. Here we set $\eta_{j}= (w-P_{h}w)(x_{j}^{+}), j=1, 2, \cdots, N-1$. 
It is evident that this $N\times N$ linear system can be decoupled and solved explicitly with $c_{N, k} = 0$ as a starting point, i.e., for $j = 1, \cdots, N- 1$. In other words, from $det(\mathbb{A}_{N\times N}) = \theta^{N-1}$ and $\theta > 1/2$, we can conclude that $\mathbb{A}_{N\times N}$ is invertible,
which establishes the existence and uniqueness of $E$, and further $P^{(\theta)}w$. 

Then, we will obtain the approximation error estimates
\eqref{eq: interpolation-theory} and \eqref{eq:interpolation-theory-1}. For $j = 1, 2, \cdots, N-1$,
$$|\eta_{j}|\le C\Vert w-P_{h}w\Vert_{L^{\infty}(I_{j})}\le Ch_{j}^{k+1}\Vert w\Vert_{W^{k+1, \infty}(I_{j})}\le Ch^{k+1}\Vert w\Vert_{W^{k+1, \infty}(\mathcal{T}_{N})}.$$
Furthermore, we can derive $\mathbb{A}^{-1}$ as follows
\begin{equation*}
\mathbb{A}^{-1}=\left[
\begin{array}{cccccc}
\frac{1}{\theta} &(-1)^{k+1}\frac{1-\theta}{\theta^{2}}&0&\cdots&\cdots&0\\
0&\frac{1}{\theta} &(-1)^{k+1}\frac{1-\theta}{\theta^{2}}&0&\cdots&0\\
0&0&\frac{1}{\theta} &(-1)^{k+1}\frac{1-\theta}{\theta^{2}}&0&0\\
\vdots&\vdots&&\ddots&\ddots&\vdots\\
0&0&&&\frac{1}{\theta} &(-1)^{k+1}\frac{1-\theta}{\theta^{2}}\\
0&0&&&&1
\end{array}
\right]
\end{equation*}
and obtain the bound of $c_{j, k} (j=1, 2, \cdots, N-1)$ as follows: For $j=1, 2, \cdots, N-2$ and $\theta\ge \frac{1}{2}$,
\begin{equation*}
\begin{aligned}
|c_{j, k}|&=\frac{1-\theta}{\theta}|(w-P_{h}w)(x^{+}_{j})|+(-1)^{k+1}\frac{(1-\theta)^{2}}{\theta^{2}}|(w-P_{h}w)(x^{+}_{j+1})|\\
&\le C(\theta)\Vert w-P_{h}w\Vert_{L^{\infty}(I_{j+1})}+C(\theta)\Vert w-P_{h}w\Vert_{L^{\infty}(I_{j+2})}\\
&\le C(\theta)h^{k+1}\Vert w\Vert_{W^{k+1, \infty}(\mathcal{T}_{N})}.
\end{aligned}
\end{equation*}
For $j=N-1$, one has $$|c_{N-1, k}|\le \frac{1-\theta}{\theta}\eta_{N-1}\le C(\theta)h^{k+1}\Vert w\Vert_{W^{k+1, \infty}(\mathcal{T}_{N})}.$$
Then, it is easy to get 
$$\Vert E\Vert_{L^{\infty}(I_{j})}=\Vert c_{j, k}P_{j, k}(x)\Vert_{L^{\infty}(I_{j})}=|c_{j, k}|\Vert P_{k}\Vert_{L^{\infty}(\bar{\Theta})}\le C(\theta)h^{k+1}\Vert w\Vert_{W^{k+1, \infty}(\mathcal{T}_{N})}$$
and
\begin{equation*}
\begin{aligned}
\Vert E\Vert_{L^{2}(I_{j})}=\left(\int_{I_{j}}E^{2}\mr{d}x\right)^{\frac{1}{2}}\le C(\theta)\left(h_{j}\Vert E\Vert^{2}_{L^{\infty}(I_{j})}\right)^{\frac{1}{2}}\le C(\theta)h^{k+\frac{3}{2}}\Vert w\Vert_{W^{k+1, \infty}(\mathcal{T}_{N})}.
\end{aligned}
\end{equation*}
Hence the error estimates \eqref{eq: interpolation-theory} and \eqref{eq:interpolation-theory-1} follow immediately. 
\end{proof}

Assume that $G_{k}w$ is Gau{\ss} Lobatto projection defined in \S 4.1,  by using the interpolation theories in \cite[Theorem 3.1.4]{ciarlet2002finite}, one has
\begin{equation}\label{eq:interpolation-theory}
\Vert w-G_{k}w \Vert_{W^{l, q}(I_{j})}\le C h_{j}^{k+1-l+1/q-1/p}\vert w \vert_{W^{k+1, p}(I_{j})},
\end{equation}
for $w\in W^{k+1, p}(I_{j})$, in which $l=0, 1$ and $1\le p, q\le \infty$.
\begin{lemma}\label{lem:interpolation-error}
Let Assumption \ref{assumption}, $\sigma \ge k+1$ hold and assume that  $\beta_{0, j} (j=0, 1, 2, \cdots, N)$ are given in \eqref{eq:A-4}. Then on Shishkin mesh \eqref{eq:Shishkin mesh-Roos}, 
\begin{align}
&\Vert S-P^{(\theta)}S \Vert_{[0, x_{N/2-1}]} +\Vert w-\pi w \Vert_{[0, 1-\tau_{t}]}\le CN^{-(k+1)},\label{eq:interpolation-error-11}\\
&\Vert E-G_{k}E \Vert_{[1-\tau_{t}, 1]} +\Vert w-\pi w \Vert_{[1-\tau_{t}, 1]}\le  C\epsilon^{\frac{1}{2}}(N^{-1}\ln N)^{k+1},\label{eq:interpolation-error-12}\\
&\Vert (w-\pi w)' \Vert_{[0, 1-\tau_{t}]}\le C\left(N^{-k}+\epsilon^{-\frac{1}{2}}N^{-\sigma}+N^{1-\sigma}\right), \label{eq:interpolation-error-13}\\
&\Vert w-\pi w \Vert_{L^{\infty}(I_j)}\le C N^{-(k+1)}, \quad j=1, 2, \ldots, \frac{N}{2},\label{eq:interpolation-error-14}\\
&\Vert w-\pi w \Vert_{L^{\infty}(I_j)}\le C (N^{-1}\ln N)^{k+1},\quad j=\frac{N}{2}+1, \ldots, N,\label{eq:interpolation-error-15}\\
&\Vert G_{k}w-w \Vert_{E, [0, 1-\tau_{t}]} \le C\left(\epsilon^{1/2}N^{-k}+N^{-(k+1)}+\epsilon^{1/2}N^{1-\sigma}\right),\label{eq:DG-energy-norm-1}\\
&\Vert w-P^{(\theta)}w \Vert_{E, [0, 1-\tau_{t}]} \le C(N^{-1}\ln N)^{k+\frac{1}{2}}
.\label{eq:DG-energy-norm-11}
\end{align}
\end{lemma}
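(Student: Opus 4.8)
The plan is to handle all seven estimates through the solution decomposition $w=S+E$ of Theorem \ref{Thm:regularity}, bounding the smooth part $S$ and the layer part $E$ separately and then recombining. Throughout I would record the two mesh widths, namely $h_j=O(N^{-1})$ on the coarse block $[0,1-\tau_t]$ (where $j\le N/2$) and $h_j=2\tau_t/N=O(\epsilon N^{-1}\ln N)$ on the fine block $[1-\tau_t,1]$ (where $j>N/2$), and the transition identity $\exp(-\alpha\tau_t/\epsilon)=N^{-\sigma}$, together with Assumption \ref{assumption} ($\epsilon\le CN^{-1}$, hence $\epsilon^{-i}\le CN^{i}$) and $\sigma\ge k+1$. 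The governing principle is that on the coarse block $S$ is dominant and the projection bounds of Lemma \ref{FFFFFFF} apply with $\|S\|_{W^{k+1,\infty}}\le C$, whereas the layer $E$ is controlled by its exponential smallness; on the fine block the roles reverse and the Gauss--Lobatto estimate \eqref{eq:interpolation-theory} is balanced against the derivative blow-up of $E$.

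For the plain $L^2$ and $L^\infty$ bounds \eqref{eq:interpolation-error-11}, \eqref{eq:interpolation-error-12}, \eqref{eq:interpolation-error-14} and \eqref{eq:interpolation-error-15} the argument is short. On the coarse block the $L^\infty$ and $L^2$ projection bounds of Lemma \ref{FFFFFFF} give $\|S-P^{(\theta)}S\|_{L^\infty(I_j)}\le CN^{-(k+1)}$ and, after summing the $N/2$ elementwise $L^2$ errors of size $O(N^{-(k+3/2)})$, $\|S-P^{(\theta)}S\|_{[0,1-\tau_t]}\le CN^{-(k+1)}$; the layer term follows from $\int_0^{1-\tau_t}\exp(-2\alpha(1-x)/\epsilon)\,\mr{d}x\le C\epsilon N^{-2\sigma}$, which yields $\|E\|_{[0,1-\tau_t]}\le C\epsilon^{1/2}N^{-\sigma}\le CN^{-(k+1)}$ and, via the $L^2/L^\infty$ stability of the projection, the same order for $\|E-P^{(\theta)}E\|$. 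On the fine block the crucial cancellation is $h^{k+1}\epsilon^{-(k+1)}=(N^{-1}\ln N)^{k+1}$ from \eqref{eq:interpolation-theory} together with $|E^{(k+1)}|\le C\epsilon^{-(k+1)}\exp(-\alpha(1-x)/\epsilon)$; integrating the exponential over $[1-\tau_t,1]$ supplies the extra $\epsilon^{1/2}$ in \eqref{eq:interpolation-error-12}, the $L^\infty$ version gives \eqref{eq:interpolation-error-15}, and the smooth part is of the higher order $O(h^{k+1})$.

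The derivative bound \eqref{eq:interpolation-error-13} and the two energy-norm bounds \eqref{eq:DG-energy-norm-1}, \eqref{eq:DG-energy-norm-11} require more care, and here the key is to avoid the lossy route through the interpolation estimate for the layer. For $\|(w-P^{(\theta)}w)'\|_{[0,1-\tau_t]}$ the smooth part contributes $CN^{-k}$ (summing elementwise $H^1$ errors of size $O(N^{-(k+1/2)})$); the layer contributes in two ways, namely the direct bound $\|E'\|_{[0,1-\tau_t]}\le C\epsilon^{-1/2}N^{-\sigma}$ and the inverse-inequality bound $\|(P^{(\theta)}E)'\|_{I_j}\le CH^{-1}\|P^{(\theta)}E\|_{I_j}$ combined with the pointwise smallness $\|P^{(\theta)}E\|_{L^\infty(I_j)}\le C\exp(-\alpha(1-x_j)/\epsilon)$, which after summation gives $\le C\epsilon^{1/2}N^{1-\sigma}\le CN^{1-\sigma}$. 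The same two-track treatment of the layer (direct bound on $E'$, inverse inequality on the interpolant) delivers \eqref{eq:DG-energy-norm-1} once each term is multiplied by the $\epsilon^{1/2}$ from the energy norm \eqref{eq:energy norm}, using $N^{-\sigma}\le N^{-(k+1)}$ and $\epsilon N^{1-\sigma}\le \epsilon^{1/2}N^{1-\sigma}$.

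For \eqref{eq:DG-energy-norm-11} I would expand $\|\cdot\|_{E,[0,1-\tau_t]}^2$ into its three pieces: the $\epsilon$-weighted $H^1$ seminorm is $\epsilon^{1/2}$ times the bound \eqref{eq:interpolation-error-13}, and each resulting term ($\epsilon^{1/2}N^{-k}$, $N^{-\sigma}$, $\epsilon^{1/2}N^{1-\sigma}$) is $\le (N^{-1}\ln N)^{k+1/2}$ using $\epsilon^{1/2}\le N^{-1/2}$ and $\sigma\ge k+1$; the $\gamma$-weighted $L^2$ piece is controlled by \eqref{eq:interpolation-error-11}; and the penalty piece is where the explicit weights \eqref{eq:A-4} are essential --- on $j\le N/2-1$ the combination $\epsilon\beta_{0,j}/\Delta h_j=O(\beta_1^2)$ converts the jump sum into $C\sum_j[w-P^{(\theta)}w]_j^2$, each jump being $O(N^{-(k+1)})$ by \eqref{eq:interpolation-error-14}, for a total of $O(N^{-(k+1/2)})$; the single transition node $x_{N/2}$ is dispatched by Remark \ref{special}, since $(w-P^{(\theta)}w)(x_{N/2}^-)=0$ by \eqref{eq: Kk-11}. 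I expect this energy-norm bookkeeping to be the main obstacle: the delicate point is to confirm that the layer's $\epsilon$-weighted $H^1$ contribution and the $\epsilon$-scaled penalty terms on the coarse block do not spoil the target orders, which hinges on the exact balance between the blow-up factors $\epsilon^{-i}$, the exponential smallness $N^{-\sigma}$ with $\sigma\ge k+1$, and the tailored weights \eqref{eq:A-4}, and on correctly treating the junction at $x_{N/2}$ where the two projections meet.
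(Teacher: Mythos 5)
Your proposal is correct and follows essentially the same route as the paper's proof: the decomposition $w=S+E$, the $P^{(\theta)}$ estimates of Lemma \ref{FFFFFFF} together with the Gau{\ss} Lobatto estimate \eqref{eq:interpolation-theory} for the lower-order bounds, the two-track treatment of the layer on the coarse block (direct integration of $E'$ versus inverse inequality applied to its projection), and the three-piece expansion of the energy norm in which the weights \eqref{eq:A-4} tame the penalty sum and Remark \ref{special} handles the transition node. The only cosmetic deviations are that the paper bounds the jump term at $x_{N/2}$ by the $L^{\infty}$ interpolation error on $I_{N/2}$ rather than invoking \eqref{eq: Kk-11} to make it vanish, and your intermediate bound $C\epsilon^{1/2}N^{1-\sigma}$ for the projected-layer derivative should read $CN^{1-\sigma}$, which is harmless since that is exactly the bound you go on to use.
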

\begin{proof}
According to \eqref{eq: interpolation-theory}, \eqref{eq:interpolation-theory-1} and \eqref{eq:interpolation-theory}, \eqref{eq:interpolation-error-11}-\eqref{eq:interpolation-error-15} can be proved simply. Below, we will only analyze \eqref{eq:DG-energy-norm-1} and \eqref {eq:DG-energy-norm-11}.

From the definition of the DDG norm \eqref{eq:energy norm} and the property of Gau{\ss} Lobatto projection, $[G_{k}w-w]_{j}=0, j=0, 1, \ldots, N/2$. Further, $$\Vert G_{k}w-w \Vert^{2}_{E,  [0, x_{N/2}]}=\epsilon \sum_{j=1}^{N/2} \Vert (G_{k}w-w)' \Vert^{2}_{I_j}+\sum_{j=1}^{N/2}\gamma \Vert G_{k}w-w \Vert^{2}_{I_j},$$ where  triangle inequalities, \eqref{eq:interpolation-theory} and the inverse inequality \citep[Theorem 3.2.6]{ciarlet2002finite}  yield
\begin{align*}
|\epsilon \sum_{j=1}^{N/2} \Vert (G_{k}w-w)' \Vert_{I_j}^2|&\le C\epsilon \sum_{j=1}^{N/2} \Vert (G_{k}S-S)' \Vert_{I_j}^2+C\epsilon \sum_{j=1}^{N/2} \Vert (G_{k}E-E)' \Vert_{I_j}^2\\
&\le  C\epsilon \sum_{j=1}^{N/2} h_{j}^{2k}\Vert S^{(k+1)} \Vert_{I_j}^2+ C\epsilon \sum_{j=1}^{N/2} (\Vert (G_{k}E)' \Vert_{I_j}^2+\Vert E' \Vert_{I_j}^2)\\
&\le  C\epsilon \sum_{j=1}^{N/2} h_{j}^{2k+1}\Vert S^{(k+1)} \Vert_{L^{\infty}(I_j)}^{2} + C\epsilon \sum_{j=1}^{N/2} N\Vert E \Vert_{L^{\infty}(I_j)}^{2} + CN^{-2\sigma}\\
&\le C \epsilon N^{-2k} + C \epsilon N^{2-2\sigma}+ CN^{-2\sigma}.
\end{align*}
And from \eqref{eq:interpolation-theory} and the triangle inequalities, 
$$| \sum_{j=1}^{N/2} \gamma \Vert G_{k}w-w \Vert_{I_j}^2|\le C \sum_{j=1}^{N/2} \Vert G_{k}S-S \Vert_{I_j}^{2}+C \sum_{j=1}^{N/2} \Vert G_{k}E-E \Vert_{I_j}^{2} \le CN^{-2(k+1)}.$$
So far, the proof of \eqref{eq:DG-energy-norm-1} is complete.

Then, by using \eqref{eq:energy norm}, 
\begin{align*}
 \Vert w-P^{(\theta)}w \Vert_{E, [0, 1-\tau_{t}]}^{2}&= \epsilon \sum_{j=1}^{N/2} \Vert (w-P^{(\theta)}w)' \Vert_{I_j}^{2}+ \sum_{j=1}^{N/2}\gamma \Vert w-P^{(\theta)}w \Vert_{I_{j}}^{2}\\
&+\epsilon\sum_{j=0}^{N/2}\frac{\beta_{0, j}}{\Delta h_{j}}[w-P^{(\theta)}w]_{j}^{2}\\
&=P_{1}+P_{2}+P_{3}.
\end{align*}
Thus, the problem is transformed into analyzing $P_{1}$, $P_{2}$, $P_{3}$.

For $P_{1}$, using \eqref{eq:regularity} and \eqref{eq:interpolation-theory-1}, there is
$$| \sum_{j=1}^{N/2} \epsilon\Vert (S-P^{(\theta)}S)' \Vert_{I_j}^{2}|\le C\sum_{j=1}^{N/2} h_{j}^{2k+1}\epsilon \Vert S^{(k+1)} \Vert_{L^{\infty}(\mathcal{T}_{N})}^{2} \le C\epsilon N^{-2k},$$
and from the triangle inequalities and the inverse inequality \citep[Theorem 3.2.6]{ciarlet2002finite},
\begin{align*}
|\sum_{j=1}^{N/2}\epsilon  \Vert (E-P^{(\theta)}E)' \Vert_{I_{j}}^{2}|&\le  C \sum_{j=1}^{N/2}\epsilon (\Vert E' \Vert_{I_{j}}^{2}+\Vert (P^{(\theta)}E)' \Vert_{I_{j}}^2)\\
&\le  C \sum_{j=1}^{N/2} \epsilon\int_{I_{j}}\epsilon^{-2}e^{-2 \alpha(1-x)/\epsilon}\mr{d}x+C \sum_{j=1}^{N/2}\epsilon h_{j}^{-1}\Vert P^{(\theta)}E \Vert_{L^{\infty}(I_{j})}^{2} \\
&\le  C\epsilon^{-1} \int_{0}^{x_{N/2}}e^{-2 \alpha(1-x)/\epsilon}\mr{d}x+C\epsilon N^{2}N^{-2\sigma}\\
&\le CN^{-2\sigma}+C\epsilon N^{2}N^{-2\sigma}.
\end{align*}

For $P_{2}$, according to \eqref{eq:interpolation-error-11}, it is obviously to obtain $|P_{2}| \le CN^{-2(k+1)}.$

Finally, from \eqref{eq:interpolation-error-14} and \eqref{eq:A-4}, there is 
\begin{equation*}
\begin{aligned}
|P_{3}| &\le C\sum_{j=0}^{N/2-1}\frac{\epsilon\beta_{0, j}}{\Delta h_{j}} \Vert w-\pi w \Vert_{L^{\infty}({I_{j}}\cup{I_{j+1}})}^{2}+\frac{\epsilon\beta_{0, N/2}}{\Delta h_{N/2}} [w-P^{(\theta)}w]^{2}_{N/2}\\
&\le C\sum_{j=0}^{N/2-1}\frac{\epsilon\beta_{0, j}}{\Delta h_{j}} \Vert w-\pi w \Vert_{L^{\infty}({I_{j}}\cup{I_{j+1}})}^{2}+C\frac{\epsilon\beta_{0, N/2}}{\Delta h_{N/2}} \Vert w-P^{(\theta)}w\Vert^{2}_{L^{\infty}(I_{N/2})}\\
&\le C\beta_{0, j}\epsilon N^{-2k}+ C\beta_{0, N/2}(N^{-1}\ln N)^{2k+1}\\
&\le C(N^{-1}\ln N)^{2k+1}.
\end{aligned}
\end{equation*}
So far, the proof of \eqref {eq:DG-energy-norm-11} is complete.
\end{proof}
\begin{theorem}\label{BBB}
Under the same hypotheses as in Lemma \ref{lem:interpolation-error}, and on the layer-adapted mesh with $\sigma \ge k + 1$, 
$$\Vert w- \pi w\Vert_{E}\le C(N^{-1}\ln N)^{k},$$
in which $w$ is the solution of \eqref{eq:SPP-1d}, and $\pi w$  is introduced in \eqref{eq:pi u}.
\end{theorem}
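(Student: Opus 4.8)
The plan is to split the energy norm \eqref{eq:energy norm} according to the composite structure of $\pi w$, to dispose of the outside contribution and the interior $L^2$ contribution using bounds already available, and to show that the decisive term is the $\epsilon$-weighted derivative of the layer component inside the fine mesh. First I would partition $\Vert w-\pi w\Vert_E^2$ along the transition node $x_{N/2}$: the element sums split into $j=1,\dots,N/2$, where $\pi w=P^{(\theta)}w$, and $j=N/2+1,\dots,N$, where $\pi w=G_k w$, and the jump sum is organised so that the interior-layer jumps are collected separately.

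Two observations make every jump term inside the layer disappear. The jump at the transition point vanishes, $[w-\pi w]_{N/2}=0$, by Remark \ref{special}; and since $w$ is continuous while $G_k w$ interpolates $w$ at both Gau{\ss} Lobatto endpoints $z_0=x_{j-1}$ and $z_k=x_j$ of each element, one has $[w-G_k w]_j=0$ for all $j=N/2+1,\dots,N$ (including the boundary term at $x_N$). Dropping these, and bounding the outside part by the full projection norm, the estimate reduces to
$$\Vert w-\pi w\Vert_E^2 \le \Vert w-P^{(\theta)}w\Vert_{E,[0,1-\tau_t]}^2 + \epsilon\sum_{j=N/2+1}^N\Vert (w-G_kw)'\Vert_{I_j}^2 + \gamma\sum_{j=N/2+1}^N\Vert w-G_kw\Vert_{I_j}^2.$$
For the first term I would invoke \eqref{eq:DG-energy-norm-11} directly, giving $\Vert w-P^{(\theta)}w\Vert_{E,[0,1-\tau_t]}\le C(N^{-1}\ln N)^{k+1/2}$, which is comfortably below the target; the interior $L^2$ term is equally harmless, since \eqref{eq:interpolation-error-12} bounds it by $O(\epsilon^{1/2}(N^{-1}\ln N)^{k+1})$, whose square is negligible against $(N^{-1}\ln N)^{2k}$.

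The crux is the $\epsilon$-weighted derivative inside the layer. Writing $w=S+E$ on $[1-\tau_t,1]$, the smooth part $S$ contributes a negligible amount because the fine mesh width $h_j=2\tau_t/N=C\epsilon N^{-1}\ln N$ makes $\epsilon\sum_j h_j^{2k}\Vert S^{(k+1)}\Vert_{I_j}^2$ vanish at very high order. Thus everything rests on $E$. Applying the Gau{\ss} Lobatto estimate \eqref{eq:interpolation-theory} with $l=1$, $p=q=2$ gives $\Vert (E-G_kE)'\Vert_{I_j}\le Ch_j^{k}\Vert E^{(k+1)}\Vert_{I_j}$; inserting the layer bound $|E^{(k+1)}(x)|\le C\epsilon^{-(k+1)}e^{-\alpha(1-x)/\epsilon}$ from \eqref{eq:regularity} together with $\int_{1-\tau_t}^1 e^{-2\alpha(1-x)/\epsilon}\,\mr{d}x\le C\epsilon$, I would track the powers of $\epsilon$:
$$\epsilon\sum_{j=N/2+1}^N h_j^{2k}\Vert E^{(k+1)}\Vert_{I_j}^2 \le C\,\epsilon\,(\epsilon N^{-1}\ln N)^{2k}\,\epsilon^{-2(k+1)}\,\epsilon = C(N^{-1}\ln N)^{2k}.$$
The exponents of $\epsilon$ cancel exactly ($1+2k-2(k+1)+1=0$), which is precisely why the troublesome $\epsilon^{-(k+1)}$ growth of the layer derivative is absorbed by the mesh refinement. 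This term sets the overall rate. Collecting the three pieces yields $\Vert w-\pi w\Vert_E^2\le C(N^{-1}\ln N)^{2k}$, and taking square roots gives the claim. I expect the exact cancellation of $\epsilon$-powers in the last display to be the one point requiring care; the remainder is triangle inequalities combined with the already-established \eqref{eq:DG-energy-norm-11}, \eqref{eq:interpolation-error-12}, and the approximation estimates \eqref{eq:interpolation-theory} and \eqref{eq:regularity}.
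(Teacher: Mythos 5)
Your proposal is correct and takes essentially the same route as the paper: split the energy norm at the transition point $x_{N/2}$, bound the coarse-mesh part by \eqref{eq:DG-energy-norm-11}, kill the jump terms via Remark \ref{special} and the endpoint-interpolation property of $G_k$, and let the $\epsilon$-weighted derivative of the layer component on the fine mesh set the rate $(N^{-1}\ln N)^{k}$. Your explicit verification of the cancellation of $\epsilon$-powers in that last term is actually more detailed than the paper's own proof, which disposes of the fine-mesh part by merely citing \eqref{eq:interpolation-error-12} and \eqref{eq:interpolation-error-15}.
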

\begin{proof}
Using the triangle inequality and the definition of  $\pi w$ \eqref{eq:pi u}, we have
$$\Vert w- \pi w\Vert_{E}\le C\Vert w-P^{(\theta)}w\Vert_{E, [0, x_{N/2}]}+C\Vert w-G_{k}w\Vert_{E, [x_{N/2}, 1]},$$
where the estimation of $\Vert w-P^{(\theta)}w\Vert_{E, [0, x_{N/2}]}$  has been known from \eqref{eq:DG-energy-norm-11}, therefore, we only need to analyze $\Vert w-G_{k}w\Vert_{E, [x_{N/2}, 1]}$ below. 

From \eqref{eq:interpolation-error-12} and \eqref{eq:interpolation-error-15}, one has $\Vert w-G_{k}w\Vert_{E, [x_{N/2}, 1]}\le C(N^{-1}\ln N)^{k}$. Therefore, this proof is complete.
\end{proof}
\section{Supercloseness}
Introduce $\xi:=\pi w-w_h$ and $\eta:=\pi w-w$.
From \eqref{eq:energy norm 1} and the Galerkin orthogonality, 
\begin{equation}\label{eq:chi DG}
\begin{split}
&\Vert \xi \Vert_{E}^2 \le B(\xi,\xi)=B(\pi w-w+w-w_h,\xi)=B(\eta,\xi)\\
&=\sum_{j=1}^{N} \int_{ I_j }\epsilon \eta'\xi'\mr{d}x+\epsilon\sum_{j=0}^{N}\widehat{\eta}(x_j)[\xi]_{j}+\epsilon\sum_{j=0}^{N}[\eta]_{j}\{\xi'\}_{j}\\
&-\sum_{j=1}^{N} \int_{ I_j }a(x)\eta \xi'\mr{d}x-\sum_{j=0}^{N}a(x_{j})\widetilde{\eta}(x_j)[\xi]_{j}+\sum_{j=1}^{N} \int_{ I_{j} }(b-a')\eta \xi \mr{d}x\\
&=:\mathcal{M}_{1}+\mathcal{M}_{2}+\mathcal{M}_{3}+\mathcal{M}_{4}+\mathcal{M}_{5}+\mathcal{M}_{6}.
\end{split}
\end{equation}
Below, we will estimate the right-hand side of \eqref{eq:chi DG} item by item. Firstly, we decompose $\mathcal{M}_{1}$ as
$$\mathcal{M}_{1}=\sum_{j=1}^{N/2} \int_{I_{j}}\epsilon \eta'\xi'\mr{d}x+\sum_{j=N/2+1}^{N} \int_{ I_j }\epsilon \eta'\xi'\mr{d}x.$$
For $j=1, 2, \ldots, N/2$, from H\"{o}lder inequalities and \eqref{eq:interpolation-error-13},
\begin{equation}\label{eq:chi eta}
\begin{split}
|\sum_{j=1}^{N/2} \int_{I_{j}}\epsilon \eta'\xi'\mr{d}x|&\le C\left(\sum_{j=1}^{N/2}\epsilon \Vert \eta' \Vert_{I_{j}}^{2}\right)^{\frac{1}{2}}+\left(\sum_{j=1}^{N/2}\epsilon \Vert \xi' \Vert_{I_{j}}^{2}\right)^{\frac{1}{2}}\\
&\le C(\epsilon^{1/2}N^{-k}+\epsilon^{1/2}N^{1-\sigma}+N^{-\sigma})\Vert \xi \Vert_{E}.
\end{split}
\end{equation}
For $j=N/2+1, \ldots, N$, by H\"{o}lder inequalities and \eqref{eq:interpolation-theory}, 
\begin{equation}\label{eq:chi eta-2}
|\sum_{j=N/2+1}^{N} \int_{ I_j }\epsilon (S-G_{k}S)'\xi'\mr{d}x|\le C\epsilon^{k+1}N^{-k}(\ln N)^{k+\frac{1}{2}}\Vert \xi \Vert_{E}.
\end{equation}
In particular, according to \eqref{eq:z v}, we derive
\begin{equation}\label{eq:chi E-LE}
\begin{split}
&|\sum_{j=N/2+1}^{N} \int_{ I_j }\epsilon (E-G_{k}E)'\xi'\mr{d}x|\le C\sum_{j=N/2+1}^{N}\epsilon h_{j}^{k+1}\Vert E^{(k+2)} \Vert_{I_j}\Vert \xi' \Vert_{I_j}\\
&\le C\left(\sum_{j=N/2+1}^{N} \epsilon(\epsilon N^{-1}\ln N)^{2(k+1)}\Vert E^{(k+2)} \Vert_{I_j}^2\right)^{\frac{1}{2}}\left(\sum_{j=N/2+1}^{N} \epsilon \Vert \xi' \Vert_{I_j}^2\right)^{\frac{1}{2}}\\
&\le C\left(\epsilon^{-1}(N^{-1}\ln N)^{2(k+1)}\int_{x_{N/2}}^{1} e^{-2 \alpha(1-x)/\epsilon}\mr{d}x\right)^{\frac{1}{2}}\Vert \xi \Vert_{E}\\
&\le CN^{-(k+1)}(\ln N)^{k+1}\Vert \xi \Vert_{E}.
\end{split}
\end{equation}
Combining \eqref{eq:chi eta}, \eqref{eq:chi eta-2} and \eqref{eq:chi E-LE}, we prove
\begin{equation}\label{eq:mr I}
\mathcal{M}_{1}\le C\left(\epsilon^{1/2}N^{-k}+\epsilon^{1/2}N^{1-\sigma}+(N^{-1}\ln N)^{k+1}\right)\Vert \xi \Vert_{E}.
\end{equation}

Then according to the definition of numerical flux \eqref{eq:A-1}, $\mathcal{M}_{2}$ can be written as
\begin{equation*}\label{eq:C-1}
\begin{aligned}
\mathcal{M}_{2}=\epsilon\eta'(x_{0})[\xi]_{0}+\epsilon\eta'(x_{N})[\xi]_{N}+\sum_{j=1}^{N-1}\epsilon\left(\frac{\beta_{0, j}}{\Delta h_{j}}[\eta]_{j}+\{\eta'\}_{j}+\beta_{1}\Delta h_{j}[\eta'']_{j}\right)[\xi]_{j},
\end{aligned}
\end{equation*}
where from \eqref{eq:interpolation-theory} and \eqref{eq: interpolation-theory}, we have
\begin{equation*}
\begin{aligned}
|\epsilon\eta'(x_{0})[\xi]_{0}|\le C\epsilon \Vert\eta'\Vert_{L^{\infty}(I_{1})}\Vert\xi\Vert_{L^{\infty}(I_{1})}
\le C(\epsilon N^{\frac{1}{2}-k}+N^{\frac{1}{2}-\sigma})\Vert\xi\Vert_{E}
\end{aligned}
\end{equation*}
and in a similar method,
\begin{equation*}
\begin{aligned}
|\epsilon \eta'(x_{N})[\xi]_{N}|&\le C\epsilon^{\frac{1}{2}}\sqrt{\frac{h_{N}}{\beta_{0, j}}}\eta'(x_{N})\left(\sqrt{\frac{\epsilon\beta_{0, j}}{h_{N}}}[\xi]_{N}\right)\\
&\le C\epsilon^{\frac{1}{2}}\sqrt{\frac{h_{N}}{\beta_{0, j}}}\Vert\eta'\Vert_{L^{\infty}(I_{N})}\Vert\xi\Vert_{E}\\
&\le C\frac{1}{\sqrt{\beta_{0, j}}}(N^{-1}\ln N)^{k+\frac{1}{2}}\Vert\xi\Vert_{E}\\
&\le CN^{-(k+1)}(\ln N)^{k+\frac{1}{2}}\Vert\xi\Vert_{E}.
\end{aligned}
\end{equation*}
Note that $\beta_{0, N}=\beta_{1}^{2}N$. In addition, for $\sum\limits_{j=1}^{N-1}\frac{\epsilon\beta_{0, j}}{\Delta h_{j}}[\eta]_{j}[\xi]_{j}$, based on \eqref{eq:pi u}, Remark \ref{special} and Gau{\ss} Lobatto projection, there is $[\eta]_{j}=0$ for $j=N/2, N/2+1, \cdots, N$. Therefore,  we just analyze the situation for $j=1, \cdots, N/2-1$:
\begin{equation*}
\begin{aligned}
&\sum_{j=1}^{N/2-1}\frac{\epsilon\beta_{0, j}}{\Delta h_{j}}[\eta]_{j}[\xi]_{j}\\
&\le C\left(\sum_{j=1}^{N/2-1}\frac{\epsilon\beta_{0, j}}{\Delta h_{j}}[\eta]^{2}_{j}\right)^{\frac{1}{2}}\left(\sum_{j=1}^{N/2-1}\frac{\epsilon\beta_{0, j}}{\Delta h_{j}}[\xi]^{2}_{j}\right)^{\frac{1}{2}}\\
&\le C\left(\sum_{j=1}^{N/2-1}\frac{\epsilon\beta_{0, j}}{N^{-1}}\Vert\eta\Vert^{2}_{L^{\infty}(I_{j}\cup I_{j+1})}\right)^{\frac{1}{2}}\Vert\xi\Vert_{E}\\
&\le C\beta_{0, j}^{\frac{1}{2}}\epsilon^{\frac{1}{2}} N^{-k}\Vert\xi\Vert_{E}\\
&\le CN^{-(k+\frac{1}{2})}\Vert\xi\Vert_{E},
\end{aligned}
\end{equation*}
where \eqref{eq:interpolation-error-14}, $\beta_{0, j}=\beta_{1}^{2}\epsilon^{-1}N^{-1}$ for $j=0, 1, \cdots, N/2-1$ have been used. For $\sum\limits_{j=1}^{N-1}\epsilon\{\eta'\}_{j}[\xi]_{j}$, we first divide it into the following parts:
$$\sum_{j=1}^{N-1}\epsilon\{\eta'\}_{j}[\xi]_{j}=\sum_{j=1}^{N/2-1}\epsilon\{\eta'\}_{j}[\xi]_{j}+\epsilon\{\eta'\}_{N/2}[\xi]_{N/2}+\sum_{j=N/2+1}^{N-1}\epsilon\{\eta'\}_{j}[\xi]_{j},$$
where from H\"{o}lder inequalities, $\epsilon\le CN^{-1}$, $\sigma\ge k+1$, $\beta_{0, j}=\beta_{1}^{2}\epsilon^{-1}N^{-1}, j=1, 2, \cdots, N/2-1$, we have
\begin{equation}\label{eq: WWW-11}
\begin{aligned}
\sum_{j=1}^{N/2-1}\epsilon\{\eta'\}_{j}[\xi]_{j}
&\le C\epsilon\left(\sum_{j=1}^{N/2-1}\frac{\Delta h_{j}}{\epsilon\beta_{0, j}}\{\eta'\}_{j}^{2}\right)^{\frac{1}{2}}\left(\sum_{j=1}^{N/2-1}\frac{\epsilon\beta_{0, j}}{\Delta h_{j}}[\xi]_{j}^{2}\right)^{1/2}\\
&\le C\epsilon\left(\sum_{j=1}^{N/2-1}\frac{N^{-1}}{\epsilon\beta_{0, j}}\Vert\eta'\Vert^{2}_{L^{\infty}(I_{j}\cup I_{j+1})}\right)^{1/2}\Vert\xi\Vert_{E}\\
&\le C\epsilon\left(N\frac{N^{-1}}{\epsilon\beta_{0, j}}(N^{-2k}+\epsilon^{-2}N^{-2\sigma})\right)^{1/2}\Vert\xi\Vert_{E}\\
&\le C\frac{1}{\sqrt{\beta_{0, j}}}(\epsilon^{1/2}N^{-k}+\epsilon^{-1/2}N^{-\sigma})\Vert\xi\Vert_{E}\\
&\le C(\epsilon N^{\frac{1}{2}-k}+N^{\frac{1}{2}-\sigma})\Vert\xi\Vert_{E}
\end{aligned}
\end{equation}
and in a similar way,
\begin{equation*}
\begin{aligned}
\sum_{j=N/2+1}^{N-1}\epsilon\{\eta'\}_{j}[\xi]_{j}
&\le C\left(\sum_{j=N/2+1}^{N-1}\frac{\epsilon \Delta h_{j}}{\beta_{0, j}}\{\eta'\}_{j}^{2}\right)^{\frac{1}{2}}\left(\sum_{j=N/2+1}^{N-1}\frac{\epsilon\beta_{0, j}}{\Delta h_{j}}[\xi]_{j}^{2}\right)^{\frac{1}{2}}\\
&\le C\left(\sum_{j=N/2+1}^{N-1}\frac{\epsilon \Delta h_{j}}{\beta_{0, j}}\{\eta'\}_{j}^{2}\right)^{\frac{1}{2}}\Vert\xi\Vert_{E}\\
&\le C\left(N\frac{\epsilon^{2}N^{-1}\ln N}{\beta_{0, j}}\epsilon^{-2}(N^{-1}\ln N)^{2k}\right)^{\frac{1}{2}}\Vert\xi\Vert_{E}\\
&\le C\frac{1}{\sqrt{\beta_{0, j}}}N^{-k}(\ln N)^{k+\frac{1}{2}}\Vert\xi\Vert_{E}\\
&\le CN^{-(k+\frac{1}{2})}(\ln N)^{k+\frac{1}{2}}\Vert\xi\Vert_{E}.
\end{aligned}
\end{equation*}
Note that here $\Delta h_{j}=\min\{h_{j}, h_{j+1}\}$ and $\beta_{0, j}=\beta_{1}^{2}N$ for $j=N/2+1, N/2+2, \cdots, N$. When $j=N/2$, we derive
\begin{equation*}
\begin{aligned}
\epsilon\{\eta'\}_{N/2}[\xi]_{N/2}&\le C\epsilon\sqrt{\frac{\Delta h_{j}}{\epsilon\beta_{0, j}}}\{\eta'\}_{N/2}\left(\sqrt{\frac{\epsilon\beta_{0, j}}{\Delta h_{j}}}[\xi]_{N/2}\right)\\
&\le C\epsilon\sqrt{\frac{\Delta h_{j}}{\epsilon\beta_{0, j}}}\Vert\eta'\Vert_{L^{\infty}(I_{N/2}\cup I_{N/2+1})}\Vert\xi\Vert_{E}\\
&\le C\frac{1}{\sqrt{\beta_{0, j}}}\epsilon N^{-\frac{1}{2}}(\ln N)^{\frac{1}{2}}\left(N^{-k}+\epsilon^{-1}N^{-\sigma}\right)\Vert\xi\Vert_{E}\\
&\le C\frac{1}{\sqrt{\beta_{0, j}}}\left(\epsilon N^{-(k+\frac{1}{2})}(\ln N)^{\frac{1}{2}}+N^{-(\sigma+\frac{1}{2})}(\ln N)^{\frac{1}{2}}\right)\Vert\xi\Vert_{E}\\
&\le C\left(\epsilon N^{-(k+\frac{1}{2})}(\ln N)^{\frac{1}{2}}+N^{-(\sigma+\frac{1}{2})}(\ln N)^{\frac{1}{2}}\right)\Vert\xi\Vert_{E}\\
\end{aligned}
\end{equation*}
with $\beta_{0, j}=\beta_{1}^{2}$ for $j = N/2$. Then using the same method, we also decompose $\sum_{j=1}^{N-1}\epsilon\beta_{1}\Delta h_{j}[\eta'']_{j}[\xi]_{j}$ as
\begin{equation*}
\begin{aligned}
\sum_{j=1}^{N-1}\epsilon\beta_{1}\Delta h_{j}[\eta'']_{j}[\xi]_{j}
&\le \sum_{j=1}^{N/2}\epsilon\beta_{1}\Delta h_{j}[(S-P^{(\theta)}S)'']_{j}[\xi]_{j}\\
&+\sum_{j=1}^{N/2}\epsilon\beta_{1}\Delta h_{j}[(E-P^{(\theta)}E)'']_{j}[\xi]_{j}+\sum_{j=N/2+1}^{N-1}\epsilon\beta_{1}\Delta h_{j}[\eta'']_{j}[\xi]_{j},
\end{aligned}
\end{equation*}
where from \eqref{eq: interpolation-theory}, $\epsilon\le CN^{-1}$ and \eqref{eq:A-4},
\begin{equation*}
\begin{aligned}
\sum_{j=1}^{N/2}\epsilon\beta_{1}\Delta h_{j}[(S-P^{(\theta)}S)'']_{j}[\xi]_{j}&\le C\left(\sum_{j=1}^{N/2-1}\frac{\epsilon\beta_{1}^{2}(\Delta h_{j})^{3}}{\beta_{0, j}}\Vert(S-P^{(\theta)}S)''\Vert^{2}_{L^{\infty}(I_{j}\cup I_{j+1})}\right)^{\frac{1}{2}} \left(\sum_{j=1}^{N/2-1}\frac{\epsilon\beta_{0, j}}{\Delta h_{j}}[\xi]_{j}^{2}\right)^{\frac{1}{2}}\\
&+C\epsilon^{\frac{1}{2}}\beta_{1}h_{N/2+1}^{\frac{3}{2}}\beta_{0, j}^{-\frac{1}{2}}\Vert(S-P^{(\theta)}S)''\Vert_{L^{\infty}(I_{N/2}\cup I_{N/2+1})}\Vert\xi\Vert_{E}\\
&\le C\left( \sum_{j=1}^{N/2-1}\frac{\beta_{1}^{2}\epsilon N^{-3}}{\beta_{0, j}}h_{j}^{2(k-1)}\right)^{\frac{1}{2}}\Vert\xi\Vert_{E}+C\frac{\beta_{1}}{\sqrt{\beta_{0, j}}}\epsilon^{2}(N^{-1}\ln N)^{\frac{3}{2}}N^{1-k}\Vert\xi\Vert_{E}\\
&\le C\left(\frac{\beta_{1}}{\sqrt{\beta_{0, j}}}\epsilon^{\frac{1}{2}}N^{-k}+\frac{\beta_{1}}{\sqrt{\beta_{0, j}}}\epsilon^{2} N^{-(k+\frac{1}{2})}(\ln N)^{\frac{3}{2}}\right)\Vert\xi\Vert_{E}\\
&\le C\left(\epsilon N^{\frac{1}{2}-k}+\epsilon^{2} N^{-(k+\frac{1}{2})}(\ln N)^{\frac{3}{2}}\right)\Vert\xi\Vert_{E}\\
&\le C\epsilon N^{\frac{1}{2}-k}\Vert\xi\Vert_{E}.
\end{aligned}
\end{equation*}
In addition, due to the fact that $E''\in C[0, 1]$ and \eqref{eq:A-4}, one can derive
\begin{equation*}
\begin{aligned}
&\sum_{j=1}^{N/2}\epsilon\beta_{1}\Delta h_{j}[(E-P^{(\theta)}E)'']_{j}[\xi]_{j}\\&\le C\left(\sum_{j=1}^{N/2-1}\frac{\epsilon (\Delta h_{j})^{3}\beta_{1}^{2}}{\beta_{0, j}}[(E-P^{(\theta)}E)'']_{j}^{2}\right)^{\frac{1}{2}} \Vert\xi\Vert_{E}\\
&+C\epsilon^{\frac{1}{2}}\frac{\beta_{1}}{\sqrt{\beta_{0, j}}}h_{N/2+1}^{\frac{3}{2}}\Vert (E-P^{(\theta)}E)''\Vert_{L^{\infty}(I_{N/2}\cup I_{N/2+1})}\Vert\xi\Vert_{E}\\
&\le C\left(\sum_{j=1}^{N/2-1}\frac{\epsilon (\Delta h_{j})^{3}\beta_{1}^{2}}{\beta_{0, j}}(E''(x_{j}^{+})-(P^{(\theta)}E)''(x_{j}^{+})-E''(x_{j}^{-})+(P^{(\theta)}E)''(x_{j}^{-}))\right)^{\frac{1}{2}}\Vert\xi\Vert_{E}\\
&+C\epsilon^{\frac{1}{2}}\frac{\beta_{1}}{\sqrt{\beta_{0, j}}}(\epsilon N^{-1}\ln N)^{\frac{3}{2}}\left(\epsilon^{-2}N^{-\sigma}+N^{2-\sigma}\right)\Vert\xi\Vert_{E}\\
&\le C\left(N\frac{\epsilon N^{-3}\beta_{1}^{2}}{\beta_{0, j}}N^{4-2\sigma}\right)^{\frac{1}{2}}\Vert\xi\Vert_{E}+C\frac{\beta_{1}}{\sqrt{\beta_{0, j}}}N^{-(\frac{3}{2}+\sigma)}(\ln N)^{\frac{3}{2}}\Vert\xi\Vert_{E}\\
&\le C\left(\frac{\beta_{1}}{\sqrt{\beta_{0, j}}}\epsilon^{\frac{1}{2}}N^{1-\sigma}+\frac{\beta_{1}}{\sqrt{\beta_{0, j}}}N^{-(\frac{3}{2}+\sigma)}(\ln N)^{\frac{3}{2}}\right)\Vert\xi\Vert_{E}\\
&\le C\left(\epsilon N^{\frac{3}{2}-\sigma}+N^{-(\frac{3}{2}+\sigma)}(\ln N)^{\frac{3}{2}}\right)\Vert\xi\Vert_{E}\\
\end{aligned}
\end{equation*}
Then in the same way, from $\beta_{0, j}=\beta_{1}^{2}N$ for $j=N/2+1, \cdots, N$, there is
\begin{equation*}
\begin{aligned}
\sum_{j=N/2+1}^{N-1}\epsilon\beta_{1}\Delta h_{j}[\eta'']_{j}[\xi]_{j}&\le 
C\left(\sum_{j=N/2+1}^{N-1}\frac{\epsilon (\Delta h_{j})^{3}\beta_{1}^{2}}{\beta_{0, j}}\Vert\eta''\Vert^{2}_{L^{\infty}(I_{j}\cup I_{j+1})}\right)^{1/2}\Vert\xi\Vert_{E}\\
&\le C\left(N\frac{\epsilon(\epsilon N^{-1}\ln N)^{3}\beta_{1}^{2}}{\beta_{0, j}}\epsilon^{-4}(N^{-1}\ln N)^{2(k-1)}\right)^{1/2}\Vert\xi\Vert_{E}\\
&\le C\frac{\beta_{1}}{\sqrt{\beta_{0, j}}}N^{-k}(\ln N)^{k+\frac{1}{2}}\Vert\xi\Vert_{E}\\
&\le CN^{-(k+\frac{1}{2})}(\ln N)^{k+\frac{1}{2}}\Vert\xi\Vert_{E}.
\end{aligned}
\end{equation*}
In summary, by some simple calculations, it is straightforward to obtain that
\begin{equation}\label{P-2}
\mathcal{M}_{2}\le CN^{-(k+\frac{1}{2})}(\ln N)^{k+\frac{1}{2}}\Vert\xi\Vert_{E}.
\end{equation}
%
%
%

Next we can divide  $\mathcal{M}_{3}$ as
$$\mathcal{M}_{3}=-\epsilon \sum_{j=0}^{N/2-1}[\eta]_{j}\{\xi'\}_{j}-\epsilon \sum_{j=N/2}^{N}[\eta]_{j}\{\xi'\}_{j}.$$
From \eqref{eq:pi u} and Remark \ref{special}, we only analyze $-\epsilon\sum\limits_{j=0}^{N/2-1}[\eta]_{j}\{\xi'\}_{j}$. Applying the inverse inequality and \eqref{eq:interpolation-error-14}, 
\begin{equation}\label{eq:chi'(x_i)[eta(x_i)]}
\begin{split}
&|-\epsilon \sum_{j=0}^{N/2-1}[\eta]_{j}\{\xi'\}_{j}|\\
&\le|\epsilon [\eta]_{0}\{\xi'\}_{0}|+|\epsilon \sum_{j=1}^{N/2-1}[\eta]_{j}\{\xi'\}_{j}|\\
&\le C \Vert \eta \Vert_{L^{\infty}(I_{1})}\epsilon \Vert \xi' \Vert_{L^{\infty}(I_{1})}+C \sum_{j=1}^{N/2-1}\Vert \eta \Vert_{L^{\infty}(I_{j} \cup {I_{j+1}})}\epsilon \Vert \xi' \Vert_{L^{\infty}(I_{j} \cup {I_{j+1}})}\\
&\le C \epsilon\Vert \eta \Vert_{L^{\infty}(I_{1})} N^{\frac{1}{2}}\Vert \xi' \Vert_{I_{1}}+C\epsilon N^{\frac{1}{2}} \sum_{j=1}^{N/2-1}\Vert \eta \Vert_{L^{\infty}(I_{j} \cup {I_{j+1}})} \Vert \xi' \Vert_{I_{j} \cup {I_{j+1}}}\\
&\le C\epsilon^{1/2}N^{\frac{1}{2}}N^{-(k+1)}\Vert \xi \Vert_{E}+C\epsilon^{1/2}N^{\frac{1}{2}}N^{-(k+\frac{1}{2})}\Vert \xi \Vert_{E}\\
&\le C(\epsilon^{1/2}N^{-(k+\frac{1}{2})}+\epsilon^{1/2}N^{-k})\Vert \xi \Vert_{E}\\
&\le CN^{-(k+\frac{1}{2})}\Vert \xi \Vert_{E}.
\end{split}
\end{equation}


We consider $\mathcal{M}_{4}$ and $\mathcal{M}_{5}$, which are divided into $1 \le j \le N/2$ and $N/2+1 \le j \le N$ for analysis. For $1 \le j \le N/2$, through \eqref{eq: C-1}, \eqref{eq: C-2}, \eqref{eq: C-3}, and \eqref{eq: Kk-12}, we derive $\eta(x_{N/2})=0$ and
\begin{equation*}\label{eq:1-N/2 V VI }
\begin{split}
&-\sum_{j=1}^{N/2} \int_{ I_j }a\eta \xi'\mr{d}x-\sum_{j=0}^{N/2}a(x_j)\widetilde{\eta}(x_{j})[\xi]_{j}\\
&=-\sum_{j=1}^{N/2} \int_{ I_j }(a-a(x_{j-\frac{1}{2}}))\eta \xi'\mr{d}x-\sum_{j=1}^{N/2} \int_{ I_j }a(x_{j-\frac{1}{2}})\eta \xi'\mr{d}x-\sum_{j=0}^{N/2}a(x_j)\widetilde{\eta}(x_{j})[\xi]_{j}\\
&=-\sum_{j=1}^{N/2} \int_{ I_j }(a-a(x_{j-\frac{1}{2}}))\eta \xi'\mr{d}x,
\end{split}
\end{equation*}
where $a(x_{j-\frac{1}{2}})$ is the value of $a(x)$ at the point $x_{j-\frac{1}{2}}$ on $[x_{j-1}, x_{j}]$. Then from the Lagrange mean value theorem, there exists $\lambda$ between $x$ and $x_{j-\frac{1}{2}}$ such that
$$a(x)-a(x_{j-\frac{1}{2}})=a'(\lambda)(x-x_{j-\frac{1}{2}}).$$
Here $a$ is a smooth function. Therefore, adopting the inverse inequality and \eqref{eq:interpolation-error-14}, 
\begin{equation*}\label{eq:1-N/2 ax }
\begin{split}
&|-\sum_{j=1}^{N/2} \int_{ I_j }(a(x)-a(x_{j-\frac{1}{2}}))\eta \xi'\mr{d}x|=|-\sum_{j=1}^{N/2} \int_{ I_j }a'(\lambda)(x-x_{j-\frac{1}{2}})\eta \xi'\mr{d}x|\\
&\le C \sum_{j=1}^{N/2}h_{j} \Vert \eta \Vert_{L^{\infty} (I_{j})}\Vert \xi' \Vert_{L^{1}{(I_{j})}}\le C \sum_{j=1}^{N/2}h_{j} N^{\frac{1}{2}}\Vert \eta \Vert_{L^{\infty} (I_{j})}\Vert \xi \Vert_{I_{j}} \\
&\le C \sum_{j=1}^{N/2}N^{-\frac{1}{2}} \Vert \eta \Vert_{L^{\infty} (I_{j})}\Vert \xi \Vert_{I_{j}}\\
&\le CN^{-(k+1)}\Vert \xi \Vert_{E}.
\end{split}
\end{equation*}

For $N/2+1\le j \le N$, recalling $[\eta(x_{j})]=0$ and $\eta(x_{j})=0$,  we will estimate
$$-\sum_{j=N/2+1}^{N} \int_{ I_j }a(x)\eta \xi'\mr{d}x,$$
where from the inverse inequality, \eqref {eq:interpolation-error-12} and H\"{o}lder inequalities,
\begin{equation*}\label{eq:N/2+1-N ax }
\begin{split}
&|-\sum_{j=N/2+1}^{N} \int_{ I_{j} }a(x)\eta \xi'\mr{d}x|\le C\left( \sum_{j=N/2+1}^{N} \Vert \eta \Vert_{I_{j}}^{2}\right)^{1/2}\left(\sum_{j=N/2+1}^{N}\Vert \xi' \Vert_{I_{j}}^{2}\right)^{1/2}\\
&\le C \epsilon^{\frac{1}{2}}(N^{-1}\ln N)^{k+1}\left(\sum_{j=N/2+1}^{N}\Vert \xi' \Vert_{I_{j}}^{2}\right)^{1/2}\\
&\le C(N^{-1}\ln N)^{k+1}\Vert\xi\Vert_{E}.
\end{split}
\end{equation*}
Summing up, we obatin
\begin{equation}\label{eq:V VI }
\mathcal{M}_{4}+\mathcal{M}_{5}\le C(N^{-1}\ln N)^{k+1}\Vert \xi \Vert_{E}.
\end{equation}

For $\mathcal{M}_{6}$, according to H\"{o}lder inequalities,  \eqref{eq:interpolation-error-11} and \eqref{eq:interpolation-error-12}, 
\begin{equation}\label{eq:VII VIII }
\mathcal{M}_{6}\le C\Vert \eta \Vert_{[0, 1]}\Vert \xi \Vert_{E} \le C(N^{-(k+1)}+\epsilon^{\frac{1}{2}}(N^{-1}\ln N)^{k+1})\Vert \xi \Vert_{E}.
\end{equation}

From \eqref{eq:mr I}, \eqref{P-2}, \eqref{eq:chi'(x_i)[eta(x_i)]}, \eqref{eq:V VI } and \eqref{eq:VII VIII }, there is
\begin{equation*}\label{eq:x DG }
\begin{split}
\Vert \xi \Vert_{E}^{2}=\mathcal{M}_{1}+\mathcal{M}_{2}+\mathcal{M}_{3}+\mathcal{M}_{4}+\mathcal{M}_{5}+\mathcal{M}_{6}\le CN^{-(k+\frac{1}{2})}(\ln N)^{k+\frac{1}{2}}\Vert \xi \Vert_{E},
\end{split}
\end{equation*}
which implies
\begin{equation}\label{eq:pi u-un DG}
\Vert \pi w-w_h \Vert_{E}\le CN^{-(k+\frac{1}{2})}(\ln N)^{k+\frac{1}{2}}.
\end{equation}

Now we are in a position to present  the main result.  
\begin{theorem}\label{the:main result}
Let Assumption \ref{assumption} hold and $\beta_{0, j}$ be defined in \eqref{eq:A-4}. On the layer-adapted mesh \eqref{eq:Shishkin mesh-Roos} and $\sigma\ge k+1$, one has
\begin{align*}\label{eq:u-uN}
 \Vert G_{k}w-w_{h} \Vert_{E}+ \Vert \pi w-w_{h} \Vert_{E}\le &C N^{-(k+\frac{1}{2})}(\ln N)^{k+\frac{1}{2}},
\end{align*}
where $G_{k}w$ is Gau{\ss} Lobatto projection, $w_h$ is the solution of \eqref{eq:weak form-1d} and $\pi w$ is the interpolation of $u$.
\end{theorem}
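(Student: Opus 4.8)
The plan is to assemble the theorem entirely from estimates already in hand, so that essentially no new computation is required. The second summand $\Vert \pi w - w_h\Vert_E$ is precisely the bound \eqref{eq:pi u-un DG}, so I would invoke it directly. For the first summand I would insert $\pi w$ and use the triangle inequality
$$\Vert G_k w - w_h\Vert_E \le \Vert G_k w - \pi w\Vert_E + \Vert \pi w - w_h\Vert_E,$$
where the last term is again \eqref{eq:pi u-un DG}. This reduces everything to controlling $\Vert G_k w - \pi w\Vert_E$.

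The key structural observation is that $G_k w - \pi w$ is supported only on the coarse mesh $[0,1-\tau_t]$. Indeed, by the definition \eqref{eq:pi u} of the composite interpolation, $\pi w = G_k w$ on the fine-mesh elements $I_{N/2+1},\dots,I_N$, so $G_k w - \pi w$ vanishes identically there, killing all volume contributions and all interior jumps $[G_k w - \pi w]_j$ with $j\ge N/2+1$. The only delicate point is the transition node $x_{N/2}=1-\tau_t$: its right value $(G_k w - \pi w)(x_{N/2}^+)=0$ because $\pi w = G_k w$ on $I_{N/2+1}$, while its left value $(G_k w - P^{(\theta)}w)(x_{N/2}^-)$ vanishes because both Gau{\ss}--Lobatto interpolation and the global projection reproduce $w(x_{N/2}^-)$ there (cf. \eqref{eq: Kk-11}, \eqref{eq: Kk-12} and Remark \ref{special}). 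Hence the transition jump contributes nothing, and $\Vert G_k w - \pi w\Vert_E = \Vert G_k w - P^{(\theta)}w\Vert_{E,[0,1-\tau_t]}$.

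On the coarse mesh I would then split $G_k w - P^{(\theta)}w = (G_k w - w) + (w - P^{(\theta)}w)$ and apply the triangle inequality in the restricted energy norm, which gives
$$\Vert G_k w - \pi w\Vert_E \le \Vert G_k w - w\Vert_{E,[0,1-\tau_t]} + \Vert w - P^{(\theta)}w\Vert_{E,[0,1-\tau_t]}.$$
These two quantities are exactly \eqref{eq:DG-energy-norm-1} and \eqref{eq:DG-energy-norm-11}. Invoking Assumption \ref{assumption} ($\epsilon\le CN^{-1}$) and $\sigma\ge k+1$, every term of \eqref{eq:DG-energy-norm-1} is at most $CN^{-(k+1/2)}$ --- since $\epsilon^{1/2}N^{-k}\le CN^{-(k+1/2)}$, $N^{-(k+1)}\le N^{-(k+1/2)}$ and $\epsilon^{1/2}N^{1-\sigma}\le CN^{1/2-\sigma}\le CN^{-(k+1/2)}$ --- while \eqref{eq:DG-energy-norm-11} contributes $C(N^{-1}\ln N)^{k+1/2}$. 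Combining yields $\Vert G_k w - \pi w\Vert_E\le CN^{-(k+1/2)}(\ln N)^{k+1/2}$, and adding the two instances of \eqref{eq:pi u-un DG} produces the stated bound.

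I do not expect a genuine obstacle here: the heavy lifting is already carried out in Lemma \ref{lem:interpolation-error} and in the derivation of \eqref{eq:pi u-un DG}. The only step demanding care is verifying that the jump of $G_k w - \pi w$ at the transition point $x_{N/2}$ vanishes, so that the error truly localizes to the coarse mesh and the black-box estimates \eqref{eq:DG-energy-norm-1}--\eqref{eq:DG-energy-norm-11} apply without remainder; the splitting on the coarse mesh is then valid up to a harmless extra nonnegative jump term at $x_{N/2}$, which only weakens the inequality in the safe direction.
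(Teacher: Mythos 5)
Your proposal is correct and follows essentially the same route as the paper's own proof: triangle inequality to reduce to $\Vert G_k w-\pi w\Vert_E$, localization to the coarse mesh since $\pi w=G_k w$ on $[1-\tau_t,1]$, then the split into \eqref{eq:DG-energy-norm-1} and \eqref{eq:DG-energy-norm-11} together with \eqref{eq:pi u-un DG}. If anything, you are more careful than the paper in verifying that the jump at the transition point $x_{N/2}$ vanishes (via Remark \ref{special}), a point the paper passes over implicitly.
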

\begin{proof}
First, for $ \Vert G_{k}w-w_h \Vert_{E}$, through the triangle inequality, 
$$ \Vert G_{k}w-w_h \Vert_{E}\le  \Vert G_{k}w-\pi w\Vert_{E}+\Vert \pi w-w_h \Vert_{E}$$
According to \eqref{eq:pi u-un DG}, the estimate of $\Vert G_{k}w-\pi w\Vert_{E}$ has been obtained, thus, we just estimate  the bound of $ \Vert G_{k}w-\pi w \Vert_{E}$.

From the triangle inequality and the interpolation $\pi w$ \eqref{eq:pi u}, 
\begin{equation*}\label{eq:lku-piu DG }
\begin{split}
\Vert G_{k}w-\pi w \Vert_{E}&= \Vert G_{k}w-G_{k}w \Vert_{E, [1-\tau_{t}, 1]}+\Vert G_{k}w-P^{(\theta)}w \Vert_{E, [0, 1-\tau_{t}]}\\
&= \Vert G_{k}w-P^{(\theta)}w \Vert_{E, [0, 1-\tau_{t}]}\\
&\le \Vert G_{k}w-w \Vert_{E, [0, 1-\tau_{t}]}+\Vert w-P^{(\theta)}w \Vert_{E, [0, 1-\tau_{t}]}.
\end{split}
\end{equation*}
According to \eqref{eq:DG-energy-norm-1} and \eqref{eq:DG-energy-norm-11}, we have
$$\Vert G_{k}w-P^{(\theta)}w \Vert_{E, [0, 1-\tau_{t}]}\le C(N^{-1}\ln N)^{k+\frac{1}{2}}.$$
Combining with \eqref{eq:pi u-un DG}, we have completed the proof of this theorem.
\end{proof}
\begin{theorem}\label{the:main result1}
Let $\epsilon\le CN^{-2}$,  $\sigma\ge k+2$ hold and $\beta_{0, j}$ be defined as
\begin{equation}\label{eq:Arrr-4}
\beta_{0, j}=\left\{
\begin{aligned}
&\beta_{1}^{2},\quad&& \text{$j=0, 1, 2, \cdots, N/2-1$},\\
&\beta_{1}^{2}N^{-1},\quad &&\text{$j= N/2$},\\
&\beta_{1}^{2}N^{2},\quad&&\text{$j=N/2+1, \cdots, N$}.
\end{aligned}
\right.
\end{equation}
On the mesh \eqref{eq:Shishkin mesh-Roos}, one has
\begin{align*}\label{eq:u-uN}
 \Vert G_{k}w-w_{h} \Vert_{E}+ \Vert \pi w-w_{h} \Vert_{E}\le &C N^{-(k+1)}(\ln N)^{k+1},
\end{align*}
in which $G_{k}w$ is Gau{\ss} Lobatto projection, $w_h$ is the solution of \eqref{eq:weak form-1d} and $\pi w$ is the interpolation of the solution $w$ of \eqref{eq:SPP-1d}.
\end{theorem}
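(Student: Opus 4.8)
The plan is to mirror the architecture of the proof of Theorem~\ref{the:main result} and re-run every estimate while carrying the rebalanced penalty weights \eqref{eq:Arrr-4} and the sharper hypotheses $\epsilon\le CN^{-2}$, $\sigma\ge k+2$. First I would split by the triangle inequality $\Vert G_kw-w_h\Vert_E\le\Vert G_kw-\pi w\Vert_E+\Vert\pi w-w_h\Vert_E$, reducing the theorem to (i) an interpolation-comparison bound, which by the definition of $\pi w$ collapses to $\Vert w-P^{(\theta)}w\Vert_{E,[0,1-\tau_t]}+\Vert w-G_kw\Vert_{E,[0,1-\tau_t]}$, and (ii) the supercloseness bound $\Vert\xi\Vert_E=\Vert\pi w-w_h\Vert_E$. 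For (ii) I reuse coercivity \eqref{eq:energy norm 1} and Galerkin orthogonality to obtain $\Vert\xi\Vert_E^2\le B(\eta,\xi)=\mathcal{M}_1+\cdots+\mathcal{M}_6$ exactly as in \eqref{eq:chi DG}, and then re-estimate the six terms, the goal being that each borderline contribution of order $N^{-(k+\frac12)}$ in Theorem~\ref{the:main result} is now pushed to $N^{-(k+1)}$ up to logarithms.

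Several structural facts survive unchanged and I would invoke them directly. Remark~\ref{special} and the Gau{\ss}--Lobatto endpoint interpolation still give $[\eta]_j=0$ for $j\ge N/2$, and the projection interface condition \eqref{eq: C-2} still yields $\widetilde{\eta}(x_j)=0$ on the coarse part, so the convection interface contributions in $\mathcal{M}_5$ collapse exactly as before; hence $\mathcal{M}_4+\mathcal{M}_5$ again reduce to interior terms bounded by $C(N^{-1}\ln N)^{k+1}\Vert\xi\Vert_E$, and $\mathcal{M}_6$ is likewise unchanged. These already meet the target order, so no new work is needed there. The genuine improvements come from the volume diffusion term and the interface and derivative-jump terms, driven jointly by $\epsilon\le CN^{-2}$ and the new weights. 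In $\mathcal{M}_1$ the smooth part gives $\epsilon^{1/2}N^{-k}\le N^{-(k+1)}$ at once. In $\mathcal{M}_2$ the coarse jump coupling $\sum_{j\le N/2-1}\frac{\epsilon\beta_{0,j}}{\Delta h_j}[\eta]_j[\xi]_j$ is controlled by $C\beta_{0,j}^{1/2}\epsilon^{1/2}N^{-k}\Vert\xi\Vert_E$; with the $\epsilon$-free coarse weight $\beta_{0,j}=\beta_1^2$ and $\epsilon^{1/2}\le N^{-1}$ this becomes $CN^{-(k+1)}\Vert\xi\Vert_E$, while on the fine part the enlarged weight $\beta_{0,j}=\beta_1^2N^2$ produces $\beta_{0,j}^{-1/2}=\beta_1^{-1}N^{-1}$, upgrading the derivative-average coupling $\sum_{j>N/2}\epsilon\{\eta'\}_j[\xi]_j$ from the former half order to full order. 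I would reuse \eqref{eq:interpolation-error-14}, \eqref{eq:regularity} and the inverse inequality verbatim, only re-inserting the new $\beta_{0,j}$ and $\epsilon\le CN^{-2}$.

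The main obstacle is the layer part of the derivative averages $\{\eta'\}_j$ on the coarse region inside $\mathcal{M}_2$ and $\mathcal{M}_3$. With the old weight $\beta_{0,j}=\beta_1^2\epsilon^{-1}N^{-1}$ of \eqref{eq:A-4} the factor $\beta_{0,j}^{-1/2}=\beta_1^{-1}\epsilon^{1/2}N^{1/2}$ automatically converted the singular quantity $\epsilon^{-1/2}N^{-\sigma}$ arising from $|E^{(i)}|\le C\epsilon^{-i}e^{-\alpha(1-x)/\epsilon}$ into the benign $\beta_1^{-1}N^{\frac12-\sigma}$; the new $\epsilon$-free weight destroys that cancellation, so the $\epsilon^{-1/2}$ must be absorbed by hand. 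The device I would use is that every interior coarse node $x_j$ with $j\le N/2-1$ lies at least one coarse step beyond the transition, whence $e^{-2\alpha(1-x_j)/\epsilon}\le N^{-2\sigma}e^{-c/(N\epsilon)}$ with $c=4\alpha(1-\tau_t)$, combined with the elementary inequality $\epsilon^{-1}e^{-c/(N\epsilon)}\le CN$ (valid because $\epsilon\le CN^{-2}$ forces $1/(N\epsilon)\ge C^{-1}N$, and $te^{-t}\le 1/e$). Summing the resulting geometric series and invoking $\sigma\ge k+2$ turns each such coarse-layer term into $CN^{-\sigma}\le CN^{-(k+2)}$, while the lone transition node $j=N/2$ is treated separately using $\beta_{0,N/2}=\beta_1^2N^{-1}$ and the fine-cell width $\Delta h_{N/2}\sim\epsilon N^{-1}\ln N$.

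Collecting $\mathcal{M}_1,\dots,\mathcal{M}_6$ then gives $\Vert\xi\Vert_E\le CN^{-(k+1)}(\ln N)^{k+1}$. For part (i) I re-run Lemma~\ref{lem:interpolation-error}: the volume pieces $P_1,P_2$ are unaffected, and only the jump-penalty piece $P_3$ changes, being bounded with \eqref{eq:Arrr-4} by $C(N^{-1}\ln N)^{2(k+1)}$, so that $\Vert w-P^{(\theta)}w\Vert_{E,[0,1-\tau_t]}\le C(N^{-1}\ln N)^{k+1}$; together with the unchanged Gau{\ss}--Lobatto estimate this yields $\Vert G_kw-\pi w\Vert_E\le C(N^{-1}\ln N)^{k+1}$. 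The triangle inequality then closes the argument. I expect the delicate point throughout to be verifying that the three separate scalings in \eqref{eq:Arrr-4} force all interface terms to reach order $k+1$ \emph{simultaneously}, since decreasing the coarse weight helps the $[\eta][\xi]$ coupling but aggravates the $\{\eta'\}[\xi]$ coupling, and it is only the sharp exponential decay at the interior coarse nodes under $\epsilon\le CN^{-2}$ that reconciles the two.
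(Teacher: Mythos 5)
Your overall architecture coincides with the paper's: the paper's proof of Theorem~\ref{the:main result1} is literally ``re-run Theorem~\ref{the:main result} with the weights \eqref{eq:Arrr-4}'', and all the terms you declare unchanged or automatically improved ($\mathcal{M}_{4}$, $\mathcal{M}_{5}$, $\mathcal{M}_{6}$, the coarse $[\eta]_{j}[\xi]_{j}$ coupling, the fine-region and boundary terms, and part (i) via the re-estimated $P_{3}$) are indeed handled exactly that way. The genuine divergence is on the crux term $\sum_{j=1}^{N/2-1}\epsilon\{\eta'\}_{j}[\xi]_{j}$, and there the paper does something simpler than what you propose: instead of pairing $[\xi]_{j}$ with the jump-penalty part of $\Vert\xi\Vert_{E}$ (which brings in the now-useless factor $\beta_{0,j}^{-1/2}=\beta_{1}^{-1}$), it bounds $|[\xi]_{j}|\le C\Vert\xi\Vert_{L^{\infty}(I_{j}\cup I_{j+1})}\le Ch_{j}^{-1/2}\Vert\xi\Vert_{I_{j}\cup I_{j+1}}$ by the inverse inequality and pairs with the $L^{2}$ portion of the energy norm, obtaining
$\bigl(\sum_{j}\epsilon^{2}h_{j}^{-1}\Vert\eta'\Vert^{2}_{L^{\infty}(I_{j}\cup I_{j+1})}\bigr)^{1/2}\Vert\xi\Vert_{E}\le C(\epsilon N^{1-k}+N^{1-\sigma})\Vert\xi\Vert_{E}$.
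This re-pairing wins a full power of $\epsilon$ against $\{\eta'\}_{j}$, so the crude coarse-region bound $\Vert\eta'\Vert_{L^{\infty}}\le C(N^{-k}+\epsilon^{-1}N^{-\sigma})$ (using only $e^{-\alpha(1-x)/\epsilon}\le N^{-\sigma}$, nothing sharper) already yields $N^{-(k+1)}$ under $\epsilon\le CN^{-2}$ and $\sigma\ge k+2$; no refined decay, no geometric series, no $te^{-t}$ device is needed.

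Your alternative (keep the penalty pairing and exploit $e^{-2\alpha(1-x_{j})/\epsilon}\le N^{-2\sigma}e^{-c/(N\epsilon)}$ together with $\epsilon^{-1}e^{-c/(N\epsilon)}\le CN$ at interior coarse nodes) can be made to work, but as stated it has a gap: $\eta'=(P^{(\theta)}w-w)'$ on the coarse region, and your decay device controls only the $E'(x_{j})$ contribution to $\{\eta'\}_{j}$. The projection part $(P^{(\theta)}E)'(x_{j}^{\pm})$ is a piecewise polynomial with no pointwise exponential decay; its nodal values are controlled only through the inverse inequality by $Ch_{j}^{-1}\Vert P^{(\theta)}E\Vert_{L^{\infty}}\le CN\cdot N^{-\sigma}$, and for $j=N/2-1$ the right-hand limit lives on $I_{N/2}$, whose projection coefficients are fed by data from the first fine cells and carry no factor $e^{-c/(N\epsilon)}$ at all. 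So the claim that each coarse-layer term becomes $CN^{-\sigma}$ does not follow from the decay of $E$ alone; you must supplement it with the bound $\Vert(P^{(\theta)}E)'\Vert_{L^{\infty}}\le CN^{1-\sigma}$ on coarse cells, after which the prefactor $(\epsilon\Delta h_{j}/\beta_{0,j})^{1/2}\le C\epsilon^{1/2}N^{-1/2}$ and $\epsilon^{1/2}\le CN^{-1}$ still land every contribution at or below order $N^{-(k+1)}$. Two minor corrections: $\mathcal{M}_{3}$ contains $[\eta]_{j}\{\xi'\}_{j}$, not $\{\eta'\}_{j}[\xi]_{j}$, so it is not an obstacle --- its Theorem~\ref{the:main result} bound $C\epsilon^{1/2}N^{-k}\Vert\xi\Vert_{E}$ becomes $CN^{-(k+1)}\Vert\xi\Vert_{E}$ automatically once $\epsilon\le CN^{-2}$; and the inequality $\epsilon^{-1}e^{-c/(N\epsilon)}\le CN$ needs only $te^{-t}\le e^{-1}$, not the hypothesis $\epsilon\le CN^{-2}$.
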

\begin{proof}
Using the values of parameter $\beta_{0, j}, j=0, 1, \cdots, N$ \eqref{eq:Arrr-4} we derive  
\begin{equation*}
\Vert \pi w-w_h \Vert_{E}\le CN^{-(k+1)}(\ln N)^{k+1},
\end{equation*}
where we use the method different from \eqref{eq: WWW-11} to estimate $\sum\limits_{j=1}^{N/2-1}\epsilon\{\eta'\}_{j}[\xi]_{j}$,
\begin{equation*}
\begin{aligned}
\sum_{j=1}^{N/2-1}\epsilon\{\eta'\}_{j}[\xi]_{j}&\le \sum_{j=1}^{N/2-1}\epsilon\{\eta'\}_{j}\Vert\xi\Vert_{L^{\infty}(I_{j})}\le C\sum_{j=1}^{N/2-1}\epsilon h^{-1/2}_{j}\{\eta'\}_{j}\Vert\xi\Vert_{I_j}\\&\le C\left(\sum_{j=1}^{N/2-1}\epsilon^{2} h^{-1}_{j}\{\eta'\}_{j}^{2}\right)^{1/2}\Vert\xi\Vert_{E}\\&\le C\left(\sum_{j=1}^{N/2-1}\epsilon^{2} h^{-1}_{j}\Vert\eta'\Vert_{L^{\infty}(I_{j}\cup I_{j+1})}^{2}\right)^{1/2}\Vert\xi\Vert_{E}\\
&\le C(\epsilon N^{1-k}+N^{1-\sigma})\Vert\xi\Vert_{E}\\
&\le CN^{-(k+1)}\Vert\xi\Vert_{E}.
\end{aligned}
\end{equation*}
Then, applying the method similar to Theorem \ref{the:main result}, one can easily obtain this theorem.
\end{proof}
\begin{remark}
To obtain the desired convergence result, we introduce
the choose of $\beta_{0, j}$ for $j=0, 1, 2, \ldots, N$.
\begin{itemize}
\item To reach the order of $k+\frac{1}{2}$, 
for $j= N/2+1, \cdots, N$, the following condition
$$\beta_{0, j}\ge C\max\{1, N, \beta_{1}^{2}N, N^{-1}\}$$ holds and for $j= 0, 1, \cdots, N/2-1$, there is
$$C\max\{\epsilon^{-1}N^{-1}, \beta_{1}^{2}\epsilon N, \beta_{1}^{2}\epsilon N^{2k+3-2\sigma}, \epsilon N, \epsilon^{-1}N^{2k+1-2\sigma}\}\le \beta_{0, j} \le C\epsilon^{-1}N^{-1}.$$ In addition, for $j=N/2$, 
$$C\max\{\epsilon^{2}, N^{2k-2\sigma}, \beta_{1}^{2}\epsilon^{4}, \beta_{1}^{2}N^{2(k-\sigma-1)}\}\le \beta_{0, j}\le C.$$
In this paper, we choose  $\beta_{0, j}= \beta_{1}^{2}$ for $j=0, 1, \cdots, N/2$, $\epsilon \le CN^{-1}$ and $\sigma \ge {k+1}$.
\item To get the supercloseness of order $k+1$, for $j= N/2+1, \cdots, N$ we choose $\beta_{0}\ge C\max\{N^{2}, \beta_{1}^{2}N^{2}\}$, and for $j=0, 1, \cdots, N/2-1$, 
$$C\max\{\beta_{1}^{2}\epsilon N^{2}, \beta_{1}^{2}\epsilon N^{2k+4-2\sigma}\}\le \beta_{0, j}\le C\epsilon^{-1}N^{-2}.$$ 
Furthermore, for $j=N/2$, 
$$C\max\{\epsilon^{2}N, N^{2k+1-2\sigma}, N^{-1}, \beta_{1}^{2}\epsilon^{4}N, \beta_{1}^{2}N^{2k-1-2\sigma}\}\le \beta_{0, j}\le CN^{-1}.$$
Here we take $\beta_{0, j}=\beta_{1}^{2}N^{-1}$ for $j=N/2$ and $\beta_{0, j}=\beta_{1}^{2}$ for $j=0, 1, \cdots, N/2-1$. Note that $\epsilon \le CN^{-2}$ and $\sigma \ge {k+2}$.
\end{itemize}
\end{remark}

\begin{remark}
In this paper, when $k=1$, $\beta_{1}=0$. For obtaining the desired result, we also introduce the choose of $\beta_{0, j}$ for $j=0, 1, 2, \ldots, N$.
\begin{itemize}
\item To reach the order of $k+\frac{1}{2}$, we choose $\beta_{0, j}\ge CN, j= N/2+1, \cdots, N$, and for $j= 0, 1, \cdots, N/2-1$, 
$$C\max\{\epsilon N, \epsilon^{-1}N^{2k+1-2\sigma}\}\le \beta_{0, j} \le C\epsilon^{-1}N^{-1}.$$ In addition, for $j=N/2$, 
$$C\max\{\epsilon^{2}, N^{2k-2\sigma}\}\le \beta_{0, j}\le C.$$
Here we take  $\beta_{0, j}= C\epsilon^{-1}N^{-1}$ for $j=0, 1, \cdots, N/2-1$, $\beta_{0, N/2}=C$, $\epsilon \le CN^{-1}$ and $\sigma \ge {k+1}$.
\item To get the supercloseness result of order $k+1$, first we choose $\beta_{0, j}\ge CN^{2}$ for $j= N/2+1, \cdots, N$, and 
$\beta_{0, j}\le C\epsilon^{-1}N^{-2}$ for $j=0, 1, \cdots, N/2-1$.
Furthermore, for $j=N/2$, 
$$C\max\{\epsilon^{2}N, N^{2k+1-2\sigma}\}\le \beta_{0, j}\le CN^{-1}.$$
In this paper, we can take $\beta_{0, j}= N^{-1}$ for $j=N/2$ and $\beta_{0, j}=2$ for $j=0, 1, \cdots, N/2-1$. Note that here $\epsilon \le CN^{-2}$ and $\sigma \ge {k+2}$.
\end{itemize}
\end{remark}
\section{Numerical experiments}
In this section, we shall support the theoretical conclusion by considering the following singularly perturbed problem.
\begin{equation}\label{eq:KK-2}
\left\{
\begin{aligned}
 &-\epsilon w''(x)+(3-x)w'(x)+w(x)=f(x),\quad x\in \Theta: = (0,1),\\
&w(0)=w(1)=0,
\end{aligned}
\right.
\end{equation}
where $f(x)$ is chosen such that
\begin{equation*}
w(x)=x(1-e^{2(1-x)/\epsilon})
\end{equation*}
is the exact solution of the \eqref{eq:KK-2}. 


First, in the numerical example, we choose $\epsilon= 10^{-8}, N=8, 16, \cdots, 256$, $k=2, 3, 4$. Then we set $\alpha=2$, $\theta=\frac{2}{3}$, $\beta_{1}=1/(2k^{2}+2k)$, $\sigma = k +2$, and define $\beta_{0, j}$ as \eqref{eq:Arrr-4}.

Furthermore, when $k=1$ and $\beta_{1}=0$, we take $\epsilon= 10^{-8}, N=8, 16, \cdots, 512$, set $\alpha=2$, $\theta=\frac{2}{3}$, $\sigma = k +2$, and define $\beta_{0, j}$ as 
\begin{equation*}\label{eq:Arrr-4}
\beta_{0, j}=\left\{
\begin{aligned}
&2,\quad&& \text{for $j=0, 1, \cdots, N/2-1$},\\
&N^{-1},\quad&& \text{for $j= N/2$},\\
&N^{2},\quad&&\text{for $j=N/2+1, \cdots, N$}.
\end{aligned}
\right.
\end{equation*}

Then, the corresponding convergence rate is defined by
$$p_{N}= \frac{\ln e^{N}-\ln e^{2N}}{\ln \frac{2\ln N}{\ln 2N}},$$
where $e^{N}= \Vert G_{k}w-w_{h}\Vert_{E}$ is a computation error. 

\begin{table}[h]
\caption{$\Vert G_{k}w-w_h\Vert_{E}$ in the case of $\epsilon=10^{-8}$}
\footnotesize
\begin{tabular*}{\textwidth}{@{\extracolsep{\fill}} c cccccc}
\cline{1-7}{}
            \multirow{2}{*}{ $N$ }   &\multicolumn{2}{c}{$k=1$} &\multicolumn{2}{c}{$k=2$}  &\multicolumn{2}{c}{$k=3$}  \\
\cline{2-7}&$e^{N}$&$p_{N}$&$e^{N}$&$p_{N}$&$e^{N}$&$p_{N}$\\
\cline{1-7}
             $8$    & 0.668E-1  &1.49    &0.200E-1  &2.24  &0.524E-2 &2.87  \\
             $16$   &0.311E-1  &1.84  &0.631E-2  &2.76 &0.120E-2 &3.61 \\
             $32$   &0.125E-1  &2.15  &0.161E-2 &3.25  &0.202E-3 &4.26 \\
             $64$  &0.460E-2  &2.42  &0.355E-3  &3.69  &0.278E-4 &4.74 \\
             $128$   &0.160E-2  &2.66  & 0.709E-4  &4.08  &0.351E-5 &3.82\\
             $256$   &0.538E-3  &2.88  &0.133E-4  &3.67  &0.735E-6 &-4.18 \\
\cline{1-7}
\end{tabular*}
\label{table:1}
\end{table}
%
As predicted by Theorem \ref{the:main result1}, the numerical results are shown in Table \ref{table:1}.
 and Figure \ref{HH-1}.
\section{Statements and Declarations}
\subsection{Funding}
This research is supported by National Natural Science Foundation of China (11771257) and Shandong Provincial Natural Science Foundation, China (ZR2021MA004).
\subsection{Data availability statement}
The authors confirm that the data supporting the findings of this study are available within the article and its supplementary materials.
\subsection{Conflict of interests}
The authors declare that they have no conflict of interest.
%


\begin{thebibliography}{10}

\bibitem{Sty1Tob2:2003-motified}
M.~Stynes and L.~Tobiska.
\newblock The {SDFEM} for a convection-diffusion problem with a boundary layer:
  optimal error analysis and enhancement of accuracy.
\newblock {\em SIAM J. Numer. Anal.}, 41(5):1620--1642, 2003.

\bibitem{Bra1Bur2:2006-L}
M.~Braack and E.~Burman.
\newblock Local projection stabilization for the {O}seen problem and its
  interpretation as a variational multiscale method.
\newblock {\em SIAM J. Numer. Anal.}, 43(6):2544--2566, 2006.

\bibitem{Sty1Sty2:2018-C}
M.~Stynes and D.~Stynes.
\newblock {\em Convection-diffusion Problems}, volume 196.
\newblock American Mathematical Soc., 2018.


\bibitem{Bre1Fra2:1998-F}
F.~Brezzi, L.~P. Franca, and A.~Russo.
\newblock Further considerations on residual-free bubbles for
  advective-diffusive equations.
\newblock {\em Comput. Methods Appl. Mech. Engrg.}, 166(1-2):25--33, 1998.

\bibitem{Arn1:motified-1982}
D.~N. Arnold.
\newblock An interior penalty finite element method with discontinuous
  elements.
\newblock {\em SIAM J. Numer. Anal.}, 19(4):742--760, 1982.

\bibitem{Whe:motified-1978}
M.~Wheeler.
\newblock An elliptic collocation-finite element method with interior
  penalties.
\newblock {\em SIAM J. Numer. Anal.}, 15(1):152--161, 1978.

\bibitem{Miz1Hug2:1985-motified}
A.~Mizukami and T.~J.~R. Hughes.
\newblock A {P}etrov-{G}alerkin finite element method for convection-dominated
  flows: an accurate upwinding technique for satisfying the maximum principle.
\newblock {\em Comput. Methods Appl. Mech. Engrg.}, 50(2):181--193, 1985.

\bibitem{Zha1Ma2:2021-S}
J.~Zhang and X.~Ma.
\newblock Supercloseness of the {NIPG} method for a singularly perturbed
  convection diffusion problem on {S}hishkin mesh.
\newblock {\em arXiv preprint arXiv:2112.06201}, 2021.


\bibitem{Bre1Hug2:1999-motified}
F.~Brezzi, T.~J.~R. Hughes, L.~D. Marini, A.~Russo, and E.~S\"{u}li.
\newblock A priori error analysis of residual-free bubbles for
  advection-diffusion problems.
\newblock {\em SIAM J. Numer. Anal.}, 36(6):1933--1948, 1999.

\bibitem{Bre1Rus2:1994-C}
F.~Brezzi and A.~Russo.
\newblock Choosing bubbles for advection-diffusion problems.
\newblock {\em Math. Models Methods Appl. Sci.}, 4(4):571--587, 1994.

\bibitem{Bur1:2005--motified}
E.~Burman.
\newblock A unified analysis for conforming and nonconforming stabilized finite
  element methods using interior penalty.
\newblock {\em SIAM J. Numer. Anal.}, 43(5):2012--2033, 2005.

\bibitem{Bur1:2010-C}
E.~Burman.
\newblock Consistent {SUPG}-method for transient transport problems: stability
  and convergence.
\newblock {\em Comput. Methods Appl. Mech. Engrg.}, 199(17-20):1114--1123,
  2010.

\bibitem{Bur1Ern2:2007-C}
E.~Burman and A.~Ern.
\newblock Continuous interior penalty {$hp$}-finite element methods for
  advection and advection-diffusion equations.
\newblock {\em Math. Comp.}, 76(259):1119--1140, 2007.

\bibitem{Cao1Liu2Zha3:2017-S}
W.~Cao, H.~Liu, and Z.~Zhang.
\newblock Superconvergence of the direct discontinuous {G}alerkin method for
  convection-diffusion equations.
\newblock {\em Numer. Methods Partial Differential Equations}, 33(1):290--317,
  2017.

\bibitem{Car1Hop2:2011-A}
C.~Carstensen, R.~H.~W. Hoppe, N.~Sharma, and T.~Warburton.
\newblock Adaptive hybridized interior penalty discontinuous {G}alerkin methods
  for {H}(curl)-elliptic problems.
\newblock {\em Numer. Math. Theory Methods Appl.}, 4(1):13--37, 2011.

\bibitem{Ces1Rhe2:2023-motified}
A.~Cesmelioglu and S.~Rhebergen.
\newblock A hybridizable discontinuous {G}alerkin method for the coupled
  {N}avier-{S}tokes and {D}arcy problem.
\newblock {\em J. Comput. Appl. Math.}, 422:114923, 2023.

\bibitem{Che1Xu2:2005-motified}
L.~Chen and J.~Xu.
\newblock An optimal streamline diffusion finite element method for a
  singularly perturbed problem.
\newblock In {\em Recent Advances in Adaptive Computation}, volume 383 of {\em
  Contemp. Math.}, pages 191--201. Amer. Math. Soc., Providence, RI, 2005.

\bibitem{C2012-Discontinuous}
B. Cockburn, G.E. Karniadakis, and C.W. Shu.
\newblock { Discontinuous {G}alerkin {M}ethods. Theory, Computation and
  Applications}.
\newblock  Lect. Notes Comput. Sci. Eng. 11, Springer, Berlin, 2000.

\bibitem{Che1:2021-O}
Y.~Cheng.
\newblock On the local discontinuous {G}alerkin method for singularly perturbed
  problem with two parameters.
\newblock {\em J. Comput. Appl. Math.}, 392:113485, 2021.



\bibitem{ciarlet2002finite}
P.~G. Ciarlet.
\newblock {\em The Finite Element Method for Elliptic Problems}.
\newblock SIAM, 2002.

\bibitem{Coc1Shu2:1998-motified}
B.~Cockburn and C.-W. Shu.
\newblock The local discontinuous {G}alerkin method for time-dependent
  convection-diffusion systems.
\newblock {\em SIAM J. Numer. Anal.}, 35(6):2440--2463, 1998.

\bibitem{Hu1Hua2:2011-motified}
J.~Hu and Y.~Huang.
\newblock A priori and a posteriori error analysis of the discontinuous
  {G}alerkin methods for {R}eissner-{M}indlin plates.
\newblock {\em Adv. Appl. Math. Mech.}, 3(6):649--662, 2011.

\bibitem{Hua1Liu2Yi3:2012-R}
Y.~Huang, H.~Liu, and N.~Yi.
\newblock Recovery of normal derivatives from the piecewise $L^{2}$ projection.
\newblock {\em J. Comput. Phys.}, 231(4):1230--1243, 2012.

\bibitem{Hug1Fra2Hul3:1989-motified}
T.~J.~R. Hughes, L.~P. Franca, and G.~M. Hulbert.
\newblock A new finite element formulation for computational fluid dynamics:
  VIII. the {G}alerkin/least-squares method for advective-diffusive equations.
\newblock {\em Comput. Methods Appl. Mech. Engrg.},
  73(2):173--189, 1989.

\bibitem{Hug1Liu2:1979-F}
T.~J.~R. Hughes, W.~K. Liu, and A.~Brooks.
\newblock Finite element analysis of incompressible viscous flows by the
  penalty function formulation.
\newblock {\em J. Comput. Phys.}, 30(1):1--60, 1979.

\bibitem{Liu1:2015-O}
H.~Liu.
\newblock Optimal error estimates of the direct discontinuous {G}alerkin method
  for convection-diffusion equations.
\newblock {\em Math. Comp.}, 84(295):2263--2295, 2015.

\bibitem{Liu1Yan2：2008-motified}
H.~Liu and J.~Yan.
\newblock The direct discontinuous {G}alerkin ({DDG}) methods for diffusion
  problems.
\newblock {\em SIAM J. Numer. Anal.}, 47(1):675--698, 2008/09.

\bibitem{Liu1Yan2:2010-motified1}
H.~Liu and J.~Yan.
\newblock The direct discontinuous {G}alerkin ({DDG}) method for diffusion with
  interface corrections.
\newblock {\em Commun. Comput. Phys.}, 8(3):541--564, 2010.

\bibitem{Ma1Sty2:2020-motified}
G.~Ma and M.~Stynes.
\newblock A direct discontinuous {G}alerkin finite element method for
  convection-dominated two-point boundary value problems.
\newblock {\em Numer. Algorithms}, 83(2):741--765, 2020.

\bibitem{Mat1Skr2:2007--motified}
G.~Matthies, P.~Skrzypacz, and L.~Tobiska.
\newblock A unified convergence analysis for local projection stabilisations
  applied to the {O}seen problem.
\newblock {\em M2AN Math. Model. Numer. Anal.}, 41(4):713--742, 2007.

\bibitem{Men1Shu2Wu3:2016-O}
X.~Meng, C.-W. Shu, and B.~Wu.
\newblock Optimal error estimates for discontinuous {G}alerkin methods based on
  upwind-biased fluxes for linear hyperbolic equations.
\newblock {\em Math. Comp.}, 85(299):1225--1261, 2016.


\bibitem{Roo1Sty2Tob3:2008-R}
H.-G. Roos, M.~Stynes, and L.~Tobiska.
\newblock {\em Robust Numerical Methods for Singularly Perturbed Differential
  Equations}, volume~24 of {\em Springer Series in Computational Mathematics}.
\newblock Springer-Verlag, Berlin, second edition, 2008.


\bibitem{Shi1:1998-G}
G.~I. Shishkin.
\newblock Grid approximation of singularly perturbed systems of elliptic and
  parabolic equations with convective terms.
\newblock {\em Differ. Uravn.}, 34(12):1686--1696, 1998.




\bibitem{Ma1Zha2:2023-S}
X.~Ma and J.~Zhang.
\newblock Supercloseness analysis of the nonsymmetric interior penalty
  {G}alerkin method for a singularly perturbed problem on {B}akhvalov-type
  mesh.
\newblock {\em Appl. Math. Lett.}, 144:108701, 7, 2023.



\end{thebibliography}








%
%

\end{document}